\providecommand{\tabularnewline}{\\}
\numberwithin{equation}{section}
\numberwithin{figure}{section}
\theoremstyle{plain}
\newtheorem{thm}{\protect\theoremname}[section]
  \theoremstyle{definition}
  \newtheorem{defn}[thm]{\protect\definitionname}
  \theoremstyle{remark}
  \newtheorem{rem}[thm]{\protect\remarkname}
  \theoremstyle{plain}
  \newtheorem{prop}[thm]{\protect\propositionname}
  \theoremstyle{plain}
  \newtheorem{algorithm}[thm]{\protect\algorithmname}
  \theoremstyle{plain}
  \newtheorem{lem}[thm]{\protect\lemmaname}
  \theoremstyle{definition}
  \newtheorem{example}[thm]{\protect\examplename}
  \providecommand{\algorithmname}{Algorithm}
  \providecommand{\definitionname}{Definition}
  \providecommand{\examplename}{Example}
  \providecommand{\lemmaname}{Lemma}
  \providecommand{\propositionname}{Proposition}
  \providecommand{\remarkname}{Remark}
\providecommand{\theoremname}{Theorem}
\begin{document}
\title[Nonconvex feasibility: Projections, Newton method]{Nonconvex set intersection problems:\\ From projection methods to the Newton method\\ for super-regular sets} 

\subjclass[2010]{90C30, 90C55, 47J25.}
\begin{abstract}
The problem of finding a point in the intersection of closed sets
can be solved by the method of alternating projections and its variants.
It was shown in earlier papers that for convex sets, the strategy
of using quadratic programming (QP) to project onto the intersection
of supporting halfspaces generated earlier by the projection process
can lead to an algorithm that converges multiple-term superlinearly.
 The main contributions of this paper are to show that this strategy
can be effective for super-regular sets, which are structured nonconvex
sets introduced by Lewis, Luke and Malick. Manifolds should be approximated
by hyperplanes rather than halfspaces. We prove the linear convergence
of this strategy, followed by proving that superlinear and quadratic
convergence can be obtained when the problem is similar to the setting
of the Newton method. We also show an algorithm that converges at
an arbitrarily fast linear rate if halfspaces from older iterations
are used to construct the QP.
\end{abstract}

\author{C.H. Jeffrey Pang}

\curraddr{Department of Mathematics\\ 
National University of Singapore\\ 
Block S17 08-11\\ 
10 Lower Kent Ridge Road\\ 
Singapore 119076 }

\email{matpchj@nus.edu.sg}

\date{\today{}}

\keywords{super-regularity, supporting halfspaces, quadratic programming, alternating
projections}

\maketitle
\tableofcontents{}

\section{Introduction}

For finitely many closed sets $K_{1},\dots,K_{m}$ in $\mathbb{R}^{n}$,
the \emph{Set Intersection Problem }(SIP) is stated as:
\begin{equation}
\mbox{(SIP):}\quad\mbox{Find }x\in K:=\bigcap_{i=1}^{m}K_{i}\mbox{, where }K\neq\emptyset.\label{eq:SIP}
\end{equation}
 One assumption on the sets $K_{i}$ is that projecting a point in
$\mathbb{R}^{n}$ onto each $K_{i}$ is a relatively easy problem. 

A popular method of solving the SIP is the \emph{Method of Alternating
Projections }(MAP), where one iteratively projects a point through
the sets $K_{i}$ to find a point in $K$. For more on the background
and recent developments of the MAP and its variants, we refer the
reader to \cite{BB96_survey,BR09,EsRa11}, as well as \cite[Chapter 9]{Deustch01}
and \cite[Subsubsection 4.5.4]{BZ05}. We refer to the references
mentioned earlier for a commentary on the applications of the SIP
for the convex case (i.e., when all the sets $K_{i}$ in \eqref{eq:SIP}
are convex)

\subsection{The convex SIP}

One problem of the MAP is slow convergence. As discussed in the previously
mentioned references, in the presence of a regular intersection property,
one can at best expect linear convergence of the MAP. A few acceleration
methods were explored. The papers \cite{GPR67,GK89,BDHP03} explored
the acceleration of the MAP using a line search in the case where
$K_{i}$ are linear subspaces. See also the papers \cite{HernandezRamosEscalanteRaydan2011,Pang_subBAP}
for newer research for this particular setting. 

In \cite{cut_Pang12}, we looked at a different method for the convex
SIP (i.e., the SIP \eqref{eq:SIP} when the sets $K_{i}$ are all
convex). Each projection generates a halfspace containing the intersection
of the sets $K$, and one can project onto the intersection of a number
of these halfspaces using standard methods in quadratic programming
(for example an active set method \cite{Goldfarb_Idnani} or an interior
point method). We call this the SHQP (supporting halfspace and quadratic
programming) strategy. This strategy is illustrated in Figure \ref{fig:alt-proj-compare}.
We refer to \cite{cut_Pang12} for more on the history on the SHQP
strategy, and we point out a few earlier papers that had some ideas
of the SHQP strategy \cite{Pierra84,G-P98,G-P01,BausCombKruk06,Polak_Mayne79,Mayne_Polak_Heunis81}. 

\begin{figure}[h]
\begin{tabular}{|c|c|}
\hline 
\includegraphics[scale=0.3]{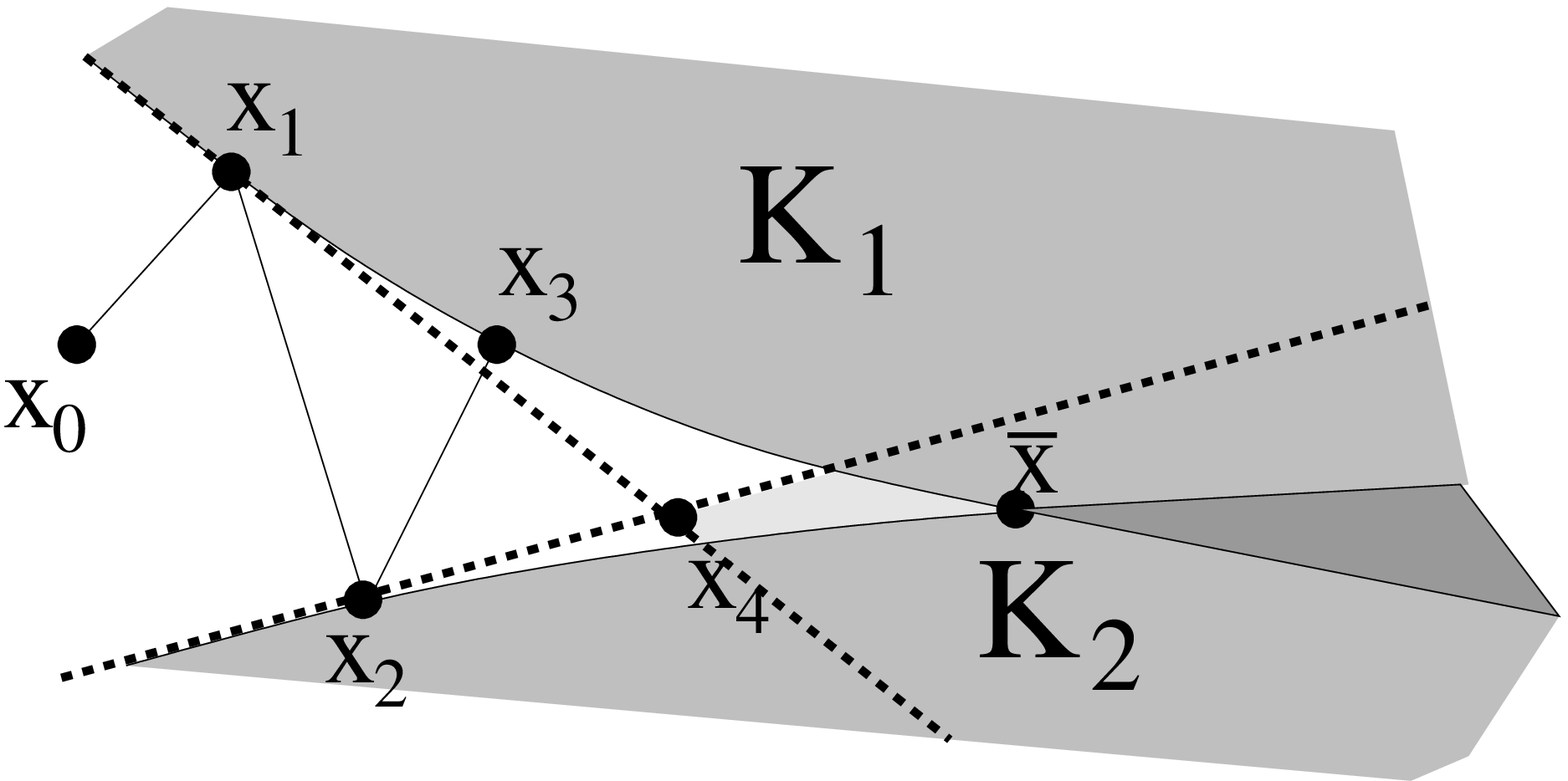} & \includegraphics[scale=0.3]{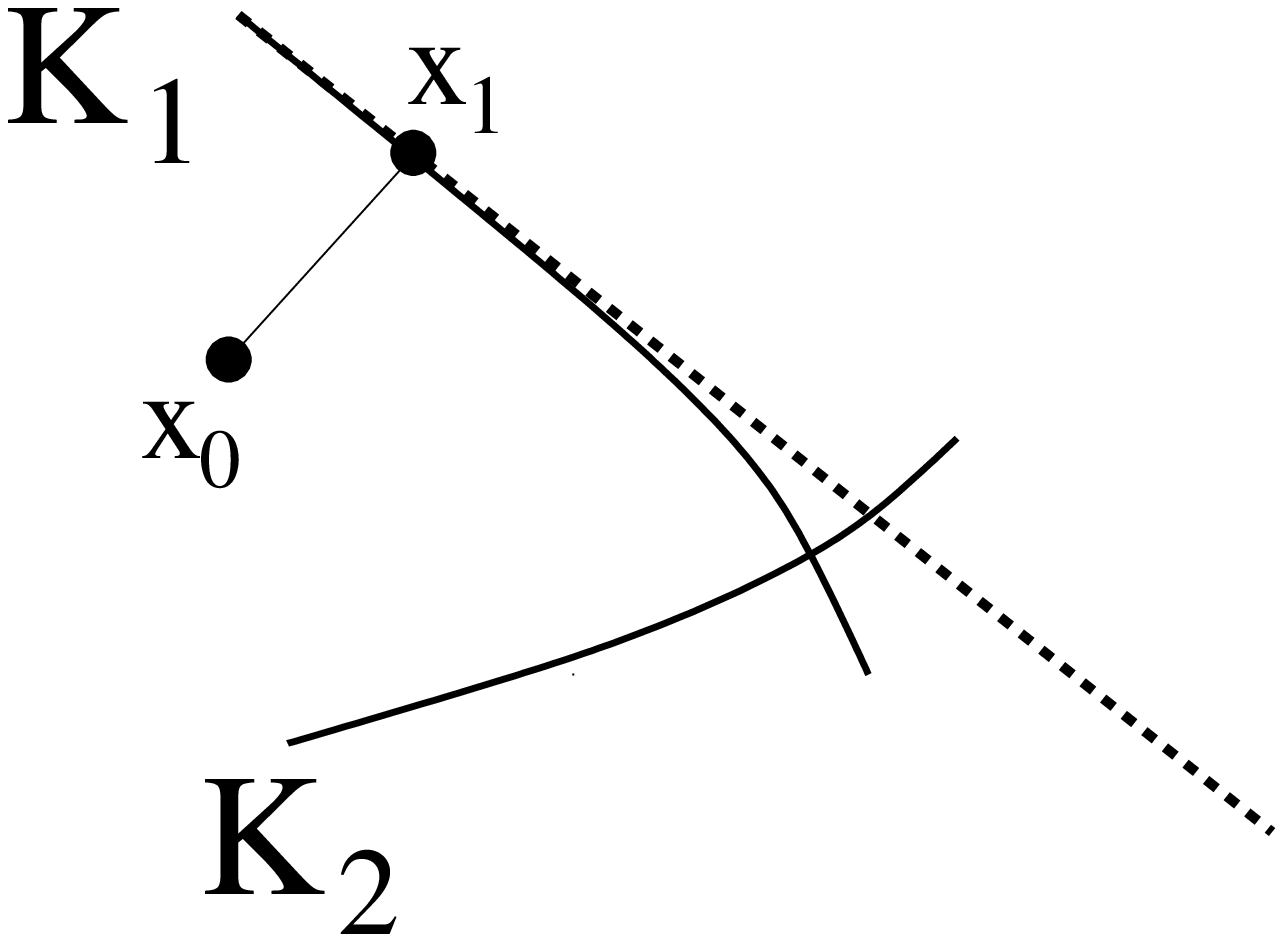}\tabularnewline
\hline 
\end{tabular}

\caption{\label{fig:alt-proj-compare}Refer to the diagram on the left. The
method of alternating projections on two convex sets $K_{1}$ and
$K_{2}$ in $\mathbb{R}^{2}$ with starting iterate $x_{0}$ arrives
at $x_{3}$ in three iterations. The point $x_{4}$ is the projection
of $x_{1}$ onto the intersection of halfspaces generated by projecting
onto $K_{1}$ and $K_{2}$ earlier. One can see that $d(x_{4},K_{1}\cap K_{2})<d(x_{3},K_{1}\cap K_{2})$,
illustrating the potential of the SHQP (supporting halfspace and quadratic
programming) strategy elaborated in \cite{cut_Pang12}. The diagram
on the right shows that such a heuristic need not be effective for
nonconvex sets.}
\end{figure}

The main result in \cite{cut_Pang12} is to show the following: For
a convex SIP satisfying the linearly regular intersection property
(Definition \ref{def:lin-reg-int}), we have an algorithm that achieves
multiple-term superlinear convergence if enough halfspaces generated
from earlier projections are stored to form the quadratic programs
to be solved in later iterations. While the proof of this result suggests
keeping an impractically huge number of halfspaces to guarantee the
fast convergence, simple examples like the one in Figure \ref{fig:alt-proj-compare}
suggests that the number of halfspaces that need to be used to obtain
the fast convergence can actually be quite small.

\subsection{The nonconvex SIP}

We quote from \cite{LLM09_lin_conv_alt_proj} on the applications
and background of the SIP in the nonconvex case (i.e., when the sets
$K_{i}$ in \eqref{eq:SIP} are not known to be convex): An example
of a nonconvex set that is easy to project onto is the set of matrices
with some fixed rank. The method of alternating projections for nonconvex
problems appear in areas such as inverse eigenvalue problems \cite{ChenChu96SINUM,Chu95SIMAX},
pole placement \cite{Orsi06SIMAX,YangOrsi06}, information theory
\cite{TroppDhillonHeathStrohmer05}, low-order control design \cite{GrigoriadisBeran2000SIAM,GrigoriadisSkelton96,OrsiHelmkeMoore06},
and image processing \cite{BauschkeCombettesLuke02,Marchesini_Tu_Wu_2014,WeberAllebach86}.
Previous convergence results on nonconvex alternating projection algorithms
have been uncommon, and have either focused on a very special case
(see, for example \cite{ChenChu96SINUM,LewisMalick08}), or have been
much weaker than for the convex case \cite{CombettesTrussell90,TroppDhillonHeathStrohmer05}.
For more discussion, see \cite{LewisMalick08}. More recent works
on the nonconvex SIP include \cite{BauschkeLukePhanWang13a,BauschkeLukePhanWang13b,HesseLuke12}.
See also \cite{AttouchBolteRedontSoubeyran2010}.

For the nonconvex problem, the projection onto a nonconvex set need
not generate a supporting halfspace. It is easy to construct examples
such that the halfspace generated by the projection process will not
contain any point in the intersection. (See for example the diagram
on the right in Figure \ref{fig:alt-proj-compare}.) The notion of
super-regularity (See Definition \ref{def:super-regular}) was first
defined in \cite{LLM09_lin_conv_alt_proj}. They also showed how super-regularity
is connected to various other well-known properties in variational
analysis. In the presence of super-regularity, they established the
linear convergence of the MAP.

\subsection{\label{sub:Contrib}Contributions of this paper}

The main contribution of this paper is to make two observations about
super-regular sets. The first observation is that once a point is
close enough to a super-regular set, the projection onto this set
produces a halfspace that locally separates a point from the set.
(This observation is used to prove Claim (a) in Theorem \ref{thm:loc-lin-conv}.)
With this observation, the SHQP strategy can be carried over to super-regular
sets. The second observation is that if one of the sets is a manifold,
then we can use a hyperplane to approximate the manifold instead of
using a halfspace in the QP subproblem and still obtain convergence
of our algorithms. See \eqref{eq:manifold-ppty}. 

In Section \ref{sec:local-strat}, we show that under typical conditions
in the study of alternating projections, an algorithm (Algorithm \ref{alg:basic-alg})
that has a sequence of projection steps and SHQP steps that visits
all the sets will converge linearly to a point in the intersection.
In Section \ref{sec:Newton-method}, we show that the SHQP strategy
applied to find a point in the intersection of manifolds and super-regular
sets with only one unit normal on its boundary points will converge
superlinearly. The convergence is quadratic under added conditions.
This makes a connection to the Newton method. Lastly, in Section \ref{sec:fast-alg},
we show that arbitrary fast linear convergence is possible when enough
halfspaces from previous iterations are kept to form the quadratic
programs to accelerate later iterations.

\subsection{Notation}

The notation we use are fairly standard. We let $\mathbb{B}(x,r)$
be the closed ball with center $x$ and radius $r$, and we denote
the projection onto a set $C$ by $P_{C}(\cdot)$.

\section{Preliminaries}

In this section, we recall some definitions in nonsmooth analysis
and some basic background material on the theory of alternating projections
that will be useful for the rest of the paper.
\begin{defn}
\label{def:normal-cones}(Normal cones and Clarke regularity) For
a closed set $C\subset\mathbb{R}^{n}$, the \emph{regular normal cone}
at $\bar{x}$ is defined as 
\begin{equation}
\hat{N}_{C}(\bar{x}):=\{y\mid\langle y,x-\bar{x}\rangle\leq o(\|x-\bar{x}\|)\mbox{ for all }x\in C\}.\label{eq:def-normal-1}
\end{equation}
The \emph{limiting normal cone} at $\bar{x}$ is defined as 
\begin{equation}
N_{C}(\bar{x}):=\{y\mid\mbox{there exists }x_{i}\xrightarrow[C]{}\bar{x},\, y_{i}\in\hat{N}_{C}(x_{i})\mbox{ such that }y_{i}\to y\}.\label{eq:def-normal-2}
\end{equation}
When $\hat{N}_{C}(\bar{x})=N_{C}(\bar{x})$, then $C$ is \emph{Clarke
regular }at $\bar{x}$. If $C$ is Clarke regular at all points, then
we simply say that it is Clarke regular.
\end{defn}
An important tool for our analysis for the rest of the paper is the
following notion of regularity of nonconvex sets.
\begin{defn}
\label{def:super-regular}\cite[Proposition 4.4]{LLM09_lin_conv_alt_proj}(Super-regularity)
A closed set $C\subset\mathbb{R}^{n}$ is \emph{super-regular} \emph{at
a point} $\bar{x}\in C$ if, for all $\delta>0$ we can find a neighborhood
$V$ of $\bar{x}$ such that 
\[
\langle z-y,v\rangle\leq\delta\|z-y\|\|v\|\mbox{ for all }z,y\in C\cap V\mbox{ and }v\in N_{C}(y).
\]
We say that $C$ is super-regular if it is super-regular at all points.
\end{defn}
The discussion in \cite{LLM09_lin_conv_alt_proj} also shows that
\begin{enumerate}
\item \label{enu:Super-reg-Clarke}Super-regularity at a point implies Clarke
regularity there \cite[Corollary 4.5]{LLM09_lin_conv_alt_proj}. (The
converse is not true \cite[Example 4.6]{LLM09_lin_conv_alt_proj}.)
\item Either amenability at a point or prox-regularity at a point implies
super-regularity there \cite[Propositions 4.8 and 4.9]{LLM09_lin_conv_alt_proj}.
\end{enumerate}
We assume that all the sets involved in this paper are super-regular.
In view of property \eqref{enu:Super-reg-Clarke}, we will not need
to distinguish between $\hat{N}_{C}(\bar{x})$ and $N_{C}(\bar{x})$
for the rest of the paper. 
\begin{rem}
(On manifolds) It is clear that if $M$ is a smooth manifold in the
usual sense, then $M$ is super-regular. Moreover, 
\begin{equation}
\mbox{For all }x\in M\mbox{, }v\in N_{M}(x)\mbox{ implies }-v\in N_{M}(x).\label{eq:manifold-ppty}
\end{equation}
For the rest of our discussions, we shall let a manifold be a super-regular
set satisfying \eqref{eq:manifold-ppty}.
\end{rem}
The following property relates $d(x,\cap_{l=1}^{m}K_{l})$ to $\max_{1\leq l\leq m}d(x,K_{l})$.
\begin{defn}
\label{def:Loc-lin-reg}(Local metric inequality) We say that a collection
of closed sets $K_{l}\subset\mathbb{R}^{n}$, $l=1,\dots,m$ satisfies
the \emph{local metric inequality }at $\bar{x}$ if there is a $\beta>0$
and a neighborhood $V$ of $\bar{x}$ such that
\begin{equation}
d(x,\cap_{l=1}^{m}K_{l})\leq\beta\max_{1\leq l\leq m}d(x,K_{l})\mbox{ for all }x\in V.\label{eq:loc-metric-ineq}
\end{equation}

\end{defn}
A concise summary of further studies on the local metric inequality
appears in \cite{Kruger_06}, who in turn referred to \cite{BBL99,Iof00,Ngai_Thera01,NgWang04}
on the topic of local metric inequality and their connection to metric
regularity. Definition \ref{def:Loc-lin-reg} is sufficient for our
purposes. The local metric inequality is useful for proving the linear
convergence of alternating projection algorithms \cite{BB93_Alt_proj,LLM09_lin_conv_alt_proj}.
See \cite{BB96_survey} for a survey. 
\begin{defn}
\label{def:lin-reg-int}(Linearly regular intersection) For closed
sets $K_{l}\subset\mathbb{R}^{n}$, we say that $\{K_{l}\}_{l}$ has
\emph{linearly regular intersection }at $x\in K:=\cap_{l=1}^{m}K_{l}$
if the following condition holds:
\begin{equation}
\mbox{If }\sum_{l=1}^{m}v_{l}=0\mbox{ for some }v_{l}\in N_{K_{l}}(x)\mbox{, then }v_{l}=0\mbox{ for all }l\in\{1,\dots,r\}.\label{eq:CQ-1}
\end{equation}
The linearly regular intersection property appears in \cite[Theorem 6.42]{RW98}
as a condition for proving that $N_{\cap_{l=1}^{m}K_{l}}(x)=\sum_{l=1}^{m}N_{K_{l}}(x)$.
As discussed in \cite{Kruger_06} and related papers, linearly regular
intersection is related to the sensitivity analysis of the SIP \eqref{eq:SIP}.
Linearly regular intersection implies the linear convergence of the
method of alternating projections. Furthermore, linearly regular intersection
implies local metric inequality, but the converse is not true.
\end{defn}
The following easy and well known principle is used to prove the Fej\'{e}r
monotonicity of iterates in Theorems \ref{lem:conv-alg} and \ref{thm:arb-lin-conv}.
\begin{prop}
\label{prop:fejer-principle}(Fej\'{e}r monotonicity) Suppose $C$
is a closed convex set in $\mathbb{R}^{n}$, with $x\notin C$ and
$y\in C$. Then for any $\lambda\in[0,1]$, 
\[
\|y-[P_{C}(x)+\lambda(P_{C}(x)-x)]\|\leq\|y-x\|,
\]
and the inequality is strict if $\lambda\in[0,1)$.
\end{prop}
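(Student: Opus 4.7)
The plan is to reduce everything to the variational characterization of the projection onto a closed convex set, which says that $p := P_C(x)$ is the unique element of $C$ satisfying
\[
\langle x - p, y - p\rangle \leq 0 \quad \text{for all } y \in C.
\]
Writing $z_\lambda := p + \lambda(p - x)$, the goal is just to compare $\|y - z_\lambda\|^{2}$ with $\|y - x\|^{2}$ for $y \in C$.

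First I would decompose $y - z_\lambda = (y - p) - \lambda(p - x)$ and $y - x = (y - p) + (p - x)$, and expand both squared norms. This gives
\[
\|y - z_\lambda\|^{2} = \|y-p\|^{2} - 2\lambda\langle y - p,\, p - x\rangle + \lambda^{2}\|p-x\|^{2},
\]
\[
\|y - x\|^{2} = \|y-p\|^{2} + 2\langle y-p,\, p-x\rangle + \|p-x\|^{2}.
\]
Subtracting,
\[
\|y-x\|^{2} - \|y - z_\lambda\|^{2} = 2(1+\lambda)\langle y - p,\, p - x\rangle + (1-\lambda^{2})\|p-x\|^{2}.
\]

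Next I would apply the projection characterization: since $y \in C$, we have $\langle y - p, x - p\rangle \leq 0$, i.e.\ $\langle y - p, p - x\rangle \geq 0$. Combined with $1 + \lambda \geq 0$ and $1 - \lambda^{2} \geq 0$ for $\lambda \in [0,1]$, the right-hand side is non-negative, giving the desired inequality $\|y - z_\lambda\| \leq \|y - x\|$.

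Finally, for strictness when $\lambda \in [0,1)$, I would note that $x \notin C$ forces $\|p - x\| > 0$, and $1 - \lambda^{2} > 0$, so the second term on the right is already strictly positive; the first term is still non-negative, so the sum is strictly positive and hence $\|y - z_\lambda\| < \|y - x\|$. There is no real obstacle here; the only point that needs any care is remembering to invoke the convex projection inequality in the correct direction so that the cross term has a sign compatible with the $(1-\lambda^{2})$ term.
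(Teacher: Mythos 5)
Your proof is correct. The paper does not actually supply a proof of this proposition --- it is stated as an ``easy and well known principle'' with no argument given --- so there is nothing to compare against; your derivation via the variational characterization $\langle x-P_{C}(x),\,y-P_{C}(x)\rangle\leq0$, the expansion of the two squared norms, and the identity
\[
\|y-x\|^{2}-\|y-z_{\lambda}\|^{2}=2(1+\lambda)\langle y-P_{C}(x),\,P_{C}(x)-x\rangle+(1-\lambda^{2})\|P_{C}(x)-x\|^{2}
\]
is the standard route, and both the nonnegativity of the right-hand side for $\lambda\in[0,1]$ and the strictness for $\lambda\in[0,1)$ (from $\|P_{C}(x)-x\|>0$ when $x\notin C$) are handled correctly.
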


\section{\label{sec:local-strat}Basic local convergence for super-regular
SIP}

In the absence of additional information on the global structure of
a nonconvex SIP, the analysis of convergence must necessarily be local.
In this section, we discuss how super-regularity can give a halfspace
that locally separates a point from the intersection of the sets.
This leads to the local linear convergence of an alternating projection
algorithm that incorporates QP steps whenever possible. 

We begin with the algorithm that we study for this section.
\begin{algorithm}
\label{alg:basic-alg}(Basic algorithm) Let $K_{l}$ be (not necessarily
convex) closed sets in $\mathbb{R}^{n}$ for $l\in\{1,\dots,m\}$.
From a starting point $x_{0}\in\mathbb{R}^{n}$, this algorithm finds
a point in the intersection $K:=\cap_{l=1}^{m}K_{l}$. \\
$\,$\\
01 For iteration $i=0,1,\dots$\\
02 $\quad$Set $x_{i}^{0}=x_{i}$.\\
03 $\quad$Find sets $S_{1}$, $\dots$, $S_{m}\subset\{1,\dots,m\}$
such that $\cup_{i=1}^{m}S_{i}=\{1,\dots,m\}$.\\
04 $\quad$For $j=1,\dots,m$\\
05 $\quad\quad$Find $x_{i,j,l}\in P_{K_{l}}(x_{i}^{j-1})$ for all
$l\in S_{j}$\\
06 $\quad\quad$For $l\in S_{j}$, define halfspace/ hyperplane $H_{i,j,l}$
by 
\[
H_{i,j,l}:=\begin{cases}
\{x:\langle x_{i}^{j-1}-x_{i,j,l},x-x_{i,j,l}\rangle=0\} & \mbox{ if }K_{l}\mbox{ is a manifold}\\
\{x:\langle x_{i}^{j-1}-x_{i,j,l},x-x_{i,j,l}\rangle\leq0\} & \mbox{ otherwise}.
\end{cases}
\]
07 $\quad\quad$Define the polyhedron $F_{i}^{j}$ by $F_{i}^{j}=\cap_{(k,l)\in\tilde{S}_{i}^{j}}H_{i,k,l}$,
where\\
08 $\quad\quad$$\tilde{S}_{i}^{j}\subset\{1,\dots,m\}\times\{1,\dots,m\}$
is such that $\{j\}\times S_{j}\subset\tilde{S}_{i}^{j}$ and\begin{subequations}\label{eq:def-S-i-j}
\begin{eqnarray}
\tilde{S}_{i}^{j}: & = & \big\{(k,l):l\in S_{k},k\in\{1,\dots,j\},\mbox{ and }\label{eq:def-S-i-j-1}\\
 &  & \phantom{\big\{(k,l):}(k_{1},l),(k_{2},l)\in\tilde{S}_{i}^{j}\mbox{ implies }k_{1}=k_{2}\big\}.\label{eq:def-S-i-j-2}
\end{eqnarray}
\end{subequations}09 $\quad$$\quad$Set $x_{i}^{j}=P_{F_{i}^{j}}(x_{i}^{j-1})$.\\
10 $\quad$end for\\
11 $\quad$Set $x_{i+1}=x_{i}^{m}$.\\
12 end
\end{algorithm}
We allow some of the $S_{j}$'s to be empty as long as the condition
$\cup_{i=1}^{m}S_{i}=\{1,\dots,m\}$ is satisfied. When $S_{j}=\{j\}$
and $\tilde{S}_{i}^{j}=\{(j,j)\}$ for all $i,j$, Algorithm \ref{alg:basic-alg}
reduces to the alternating projection algorithm. Algorithm \ref{alg:basic-alg}
has the given design because we believe that by performing QP steps
with polyhedra that bound the sets $K_{l}$ better, the convergence
to a point in $K$ can be accelerated. Yet, we still retain the flexibility
of the size of the QPs so that each step can be performed with a reasonable
amount of effort.
\begin{rem}
\label{rem:mass-proj}(Mass projection) Another particular case of
Algorithm \ref{alg:basic-alg} we will study in Section \ref{sec:Newton-method}
is when $S_{1}=\{1,\dots,m\}$, $S_{j}=\emptyset$ for all $j\in\{2,\dots,m\}$,
and $\tilde{S}_{i}^{j}=\{j\}\times S_{j}$ for all $i,j\in\{1,\dots,m\}$.
In such a case, Algorithm \ref{alg:basic-alg} is simplified to 
\begin{eqnarray*}
x_{i,1,l} & \in & P_{K_{l}}(x_{i})\\
H_{i,1,l} & = & \begin{cases}
\{x:\langle x_{i}-x_{i,1,l},x-x_{i,1,l}\rangle=0\} & \mbox{ if }K_{l}\mbox{ is a manifold}\\
\{x:\langle x_{i}-x_{i,1,l},x-x_{i,1,l}\rangle\leq0\} & \mbox{ otherwise}
\end{cases}\\
x_{i+1} & = & P_{\cap_{l=1}^{m}H_{i,1,l}}(x_{i}).
\end{eqnarray*}

\begin{rem}
\label{rem:sometimes-empty-intersect}(On the polyhedron $F_{i}^{j}$)
The polyhedron $F_{i}^{j}$ is defined by intersecting some of the
halfspaces/ hyperplanes $H_{i,k,l}$. The line \eqref{eq:def-S-i-j-2}
in \eqref{eq:def-S-i-j} defining $\tilde{S}_{i}^{j}$ ensures that
no two of the halfspaces/ hyperplanes $H_{i,k,l}$ that are intersected
to form $F_{i}^{j}$ come from projecting onto the same set. To see
why we need \eqref{eq:def-S-i-j-2}, observe that we can draw two
tangent lines to a manifold in $\mathbb{R}^{2}$ that do not intersect,
which would lead to $F_{i}^{j}=\emptyset$. 
\begin{rem}
(Treatment of manifolds) Another feature of this algorithm is that
when $K_{l}$ is a manifold, the set $H_{i,j,l}$ is a hyperplane
instead. Manifolds are super-regular sets. We take advantage of property
\eqref{eq:manifold-ppty} of manifolds to create a more logical algorithm.
The hyperplane is a better approximate of a manifold than a halfspace,
and we may expect faster convergence to a point in $K$ when we use
hyperplanes instead. Another advantage of using hyperplanes is that
quadratic programming algorithms resolve equality constraints (which
are always tight) better than they resolve inequality constraints
(where determining whether each constraint is tight at the optimal
solution requires some effort).
\end{rem}
\end{rem}
\end{rem}
The lemma below will be useful in studying the convergence of the
algorithms throughout this paper.
\begin{lem}
\label{lem:lin-conv-backbone}(Linear convergence conditions) Let
$K$ be a set in $\mathbb{R}^{n}$. Suppose an algorithm generates
iterates $\{x_{i}\}$ such that 
\begin{enumerate}
\item There exists some $\rho\in(0,1)$ such that $d(x_{i+1},K)\leq\rho d(x_{i},K)$,
and
\item there exists a constant $c>0$ such that $\|x_{i+1}-x_{i}\|\leq cd(x_{i},K)$.
\end{enumerate}
Then the sequence $\{x_{i}\}$ converges to a point $\bar{x}\in K$,
and we have, for all $i\geq0$, 
\begin{itemize}
\item [(a)]$\|x_{i}-\bar{x}\|\leq\frac{c}{1-\rho}d(x_{i},K)\leq\frac{c\rho^{i}}{1-\rho}d(x_{0},K)$,
and
\item [(b)]$\mathbb{B}(x_{i+1},\frac{c}{1-\rho}d(x_{i+1},K))\subset\mathbb{B}(x_{i},\frac{c}{1-\rho}d(x_{i},K))$.
\end{itemize}
\end{lem}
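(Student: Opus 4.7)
The plan is to chain the two hypotheses into a Cauchy estimate, extract a limit $\bar{x}$, then derive (a) and (b) from a single telescoping bound and a triangle-inequality computation.

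First I would iterate hypothesis~(1) to get $d(x_k,K) \leq \rho^{k-i} d(x_i,K)$ whenever $k \geq i$, then feed this into hypothesis~(2) to conclude
\[
\|x_{k+1}-x_k\| \;\leq\; c\, d(x_k,K) \;\leq\; c\rho^{k-i} d(x_i,K).
\]
Summing a geometric series shows that for $j > i$,
\[
\|x_j - x_i\| \;\leq\; \sum_{k=i}^{j-1} c\rho^{k-i} d(x_i,K) \;\leq\; \frac{c}{1-\rho}\, d(x_i,K),
\]
so $\{x_i\}$ is Cauchy and converges to some $\bar{x}\in\mathbb{R}^n$. Since $d(\cdot,K)$ is continuous and $d(x_i,K)\to 0$ by~(1), we have $d(\bar{x},K)=0$; as $K$ is closed (being a finite intersection of closed sets in the SIP context of the paper), this gives $\bar{x}\in K$.

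For claim~(a) I would let $j\to\infty$ in the Cauchy estimate above to obtain $\|x_i-\bar{x}\| \leq \frac{c}{1-\rho} d(x_i,K)$, and then apply the bound $d(x_i,K)\leq \rho^i d(x_0,K)$ (from iterating~(1)) for the second inequality. For claim~(b), I would take an arbitrary $y$ in the smaller ball and apply the triangle inequality:
\[
\|y-x_i\| \;\leq\; \|y-x_{i+1}\| + \|x_{i+1}-x_i\| \;\leq\; \tfrac{c}{1-\rho}\, d(x_{i+1},K) + c\, d(x_i,K).
\]
Using $d(x_{i+1},K)\leq \rho\, d(x_i,K)$ from~(1), the right-hand side becomes $\bigl(\tfrac{c\rho}{1-\rho} + c\bigr) d(x_i,K) = \tfrac{c}{1-\rho} d(x_i,K)$, which is exactly the required containment.

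None of the steps present a genuine obstacle; the only subtlety is to recognize that the constants in the two hypotheses are precisely balanced so that the combination $\tfrac{c\rho}{1-\rho}+c$ telescopes back to $\tfrac{c}{1-\rho}$, which is what makes both the sharp estimate~(a) and the nested-ball property~(b) hold with the same constant $\tfrac{c}{1-\rho}$.
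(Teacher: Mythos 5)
Your proposal is correct and follows essentially the same route as the paper: the paper establishes the bound $\|x_{i+j+1}-x_{i+j}\|\leq c\rho^{j}d(x_{i},K)$, invokes the standard Cauchy-sequence argument, and declares (a) and (b) straightforward, which are exactly the telescoping and triangle-inequality computations you carry out explicitly. Your added remark that $K$ must be closed for $\bar{x}\in K$ (which holds in the paper's setting) is a detail the paper's statement glosses over.
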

\begin{proof}
For any $j\geq0$, we have 
\[
\|x_{i+j+1}-x_{i+j}\|\leq cd(x_{i+j},K)\leq c\rho^{j}d(x_{i},K).
\]
Standard arguments in analysis shows that $\{x_{i}\}$ is a Cauchy
sequence which converges to a point $\bar{x}\in K$. Both parts (a)
and (b) are straightforward. 
\end{proof}
The next result shows how such derived halfspaces relate to the original
halfspaces.
\begin{lem}
\label{lem:derived-halfspaces}(Derived supporting halfspaces) Let
$\bar{x}\in\mathbb{R}^{n}$, and suppose $H_{1}$, $H_{2}$, $\dots$,
$H_{k}$ are $k$ halfspaces containing $\bar{x}$ such that $d(\bar{x},\partial H_{i})$,
the distance from $\bar{x}$ to the boundary of each halfspace $H_{i}$,
is at most $\alpha$. Suppose the normal vectors of each halfspace
$H_{i}$ is $v_{i}$, where $\|v_{i}\|=1$, and the constant $\eta$
defined by 
\begin{equation}
\eta:=\min\left\{ \left\Vert \sum_{i=1}^{k}\lambda_{i}v_{i}\right\Vert :\sum_{i=1}^{k}\lambda_{i}=1,\,\lambda_{i}\geq0\mbox{ for all }i\in\{1,\dots,k\}\right\} \label{eq:eta-defn}
\end{equation}
is positive. (i.e., $\eta\neq0$.) Let $F$ be the intersection of
these halfspaces. Let $\tilde{H}$ be the halfspace containing $F$
produced by projecting from a point $x^{\prime}\notin F$ onto $F$.
In other words, the halfspace $\tilde{H}$ is defined by 
\[
\{x:\langle x^{\prime}-P_{F}(x^{\prime}),x-P_{F}(x^{\prime})\rangle\leq0\}.
\]
Then the distance of $\bar{x}$ from the boundary of $\tilde{H}$
is at most $\frac{1}{\eta}\alpha$.

As a consequence, suppose $H_{i}$ are defined by $H_{i}=\{x:\langle v_{i},x\rangle\leq\alpha\}$.
Let $v=\frac{\sum_{i=1}^{k}\lambda_{i}v_{i}}{\|\sum_{i=1}^{k}\lambda_{i}v_{i}\|}$
for some nonzero vector $\lambda\in\mathbb{R}^{k}$ that has nonnegative
components, and $H$ be $H=\{x:\langle v,x\rangle\leq\frac{\alpha}{\eta}\}$.
Then we have $\cap_{i=1}^{k}H_{i}\subset\tilde{H}\subset H$. \end{lem}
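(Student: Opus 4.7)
The plan is to derive the outward normal of $\tilde{H}$ from the optimality conditions for the polyhedral projection $P_F(x')$, then use the definition of $\eta$ to lower bound its length and the hypothesis $d(\bar{x},\partial H_i)\le\alpha$ to upper bound its offset relative to $\bar{x}$.

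First, set $w:=x'-P_F(x')$. Since $F$ is a closed convex polyhedron and $P_F(x')$ is the projection of $x'$ onto $F$, we have $w\in N_F(P_F(x'))$, and the normal cone to a polyhedron at a boundary point is the nonnegative conic hull of the normals of the active constraints. Thus we may write $w=\sum_{i\in A}\mu_i v_i$ with $\mu_i\ge 0$, where $A\subset\{1,\dots,k\}$ indexes the halfspaces $H_i$ that are tight at $P_F(x')$. Because $x'\notin F$ we have $w\ne 0$, so $\sigma:=\sum_{i\in A}\mu_i>0$, and setting $\lambda_i:=\mu_i/\sigma$ for $i\in A$ and $\lambda_i:=0$ otherwise yields a convex combination with $w=\sigma\sum_i\lambda_i v_i$. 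By the definition of $\eta$ this forces $\|w\|\ge\sigma\eta$.

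For each $i\in A$ the point $P_F(x')$ lies on $\partial H_i$, and because $\|v_i\|=1$ and $\bar{x}\in H_i$, the signed distance identity $\langle v_i,P_F(x')-\bar{x}\rangle=d(\bar{x},\partial H_i)\le\alpha$ holds. Since $\bar{x}\in\cap_i H_i\subset F\subset\tilde{H}$, a direct computation gives
$$d(\bar{x},\partial\tilde{H})=\frac{\langle w,P_F(x')-\bar{x}\rangle}{\|w\|}=\frac{\sum_{i\in A}\mu_i\,\langle v_i,P_F(x')-\bar{x}\rangle}{\|w\|}\le\frac{\sigma\alpha}{\sigma\eta}=\frac{\alpha}{\eta},$$
which is the first assertion. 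For the consequence, apply the same argument with $\bar{x}=0$: each $H_i=\{\langle v_i,x\rangle\le\alpha\}$ satisfies $d(0,\partial H_i)=\alpha$, and the outward normal of $\tilde{H}$ is exactly $v=(\sum_i\lambda_i v_i)/\|\sum_i\lambda_i v_i\|$ with the $\lambda_i$ produced by the KKT expansion above, so its offset in direction $v$ is at most $\alpha/\eta$, giving $\tilde{H}\subset H$. The inclusion $\cap_i H_i\subset\tilde{H}$ is immediate from the standard supporting-halfspace property of projection onto a convex set. The only real delicacy is the bookkeeping that identifies the active-constraint multipliers of the polyhedral projection with the convex-combination coefficients appearing in the definition of $\eta$; once that identification is made, the rest is an elementary inner-product computation.
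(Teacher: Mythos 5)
Your proof is correct, and it takes a genuinely more direct route than the paper's. Both arguments start from the same KKT fact, namely that $w:=x^{\prime}-P_{F}(x^{\prime})$ lies in the conic hull of the normals of the constraints active at $P_{F}(x^{\prime})$. From there the paper reduces to a linearly independent subfamily (via Caratheodory), encloses $\bar{x}$ in the hyperslabs $S_{i}=\{x:\langle v_{i},x\rangle\in[b_{i}-\alpha,b_{i}]\}$, and solves the linear program $\min\langle\bar{v},x\rangle$ over $\cap_{i}S_{i}$ by exhibiting the explicit minimizer $P_{F}(x^{\prime})-\alpha d$ with $d=QR^{-T}\mathbf{1}$, finally evaluating $\langle\bar v,d\rangle$ to extract the bound $\alpha/\eta$. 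You instead normalize the multipliers, $w=\sigma\sum_{i}\lambda_{i}v_{i}$ with $\sum_{i}\lambda_{i}=1$, so that the definition of $\eta$ immediately gives $\|w\|\geq\sigma\eta$, and then bound the numerator $\langle w,P_{F}(x^{\prime})-\bar{x}\rangle=\sum_{i\in A}\mu_{i}\langle v_{i},P_{F}(x^{\prime})-\bar{x}\rangle\leq\sigma\alpha$ termwise using activity of each constraint at $P_{F}(x^{\prime})$ together with $\bar{x}\in H_{i}$; the quotient is then $\leq\alpha/\eta$ directly. This buys you a shorter argument that needs no Caratheodory reduction, no linear-independence assumption, and no QR computation, because you only ever bound the inner product at the single point $\bar{x}$ rather than minimizing over the whole slab intersection; what the paper's longer route buys is the slightly stronger geometric statement that the entire intersection of hyperslabs (not just $\bar{x}$) is trapped in a slab of width $\alpha/\eta$ normal to $\bar{v}$, though only the conclusion about $\bar{x}$ is used. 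Your handling of the final paragraph (taking $\bar{x}=0$ and reading $v$ as the actual unit normal of $\tilde{H}$ supplied by the KKT multipliers) matches the intended use of that statement later in the paper and is fine.
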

\begin{proof}
We remark that $\eta$ is the distance of the origin to the convex
hull of $\{v_{i}\}_{i=1}^{k}$. We can eliminate halfspaces if necessary
and assume that $k\geq1$, and that $P_{F}(x^{\prime})$ lies on the
boundaries of all the halfspaces. The KKT condition tells us that
$x^{\prime}-P_{F}(x^{\prime})$ lies in the conical hull of $\{v_{i}\}_{i=1}^{k}$.
By Caratheodory's theorem, we can assume that $k$ is not more than
the dimension $n$. We can also eliminate halfspaces if necessary
so that the vectors $\{v_{i}\}_{i=1}^{k}$ are linearly independent. 

Suppose each halfspace $H_{i}$ is defined by $\{x:\langle v_{i},x\rangle\leq b_{i}\}$,
where $b_{i}\in\mathbb{R}$. Since $P_{F}(x^{\prime})$ lies on the
boundaries of the halfspaces $H_{i}$, we have 
\begin{equation}
\langle v_{i},P_{F}(x^{\prime})\rangle=b_{i}\mbox{ for all }i.\label{eq:inn-pdt-eq-b-i}
\end{equation}
Define the hyperslab $S_{i}$ by 
\begin{equation}
S_{i}:=\{x:\langle v_{i},x\rangle\in[b_{i}-\alpha,b_{i}]\}.\label{eq:S-i-hyperslab}
\end{equation}
Since the distance from $\bar{x}$ to the boundaries of each halfspace
$H_{i}$ were assumed to be at most $\alpha$, the point $\bar{x}$
is inside all the hyperslabs $S_{i}$. 

Let $\bar{v}$ be the vector $\frac{x^{\prime}-P_{F}(x^{\prime})}{\|x^{\prime}-P_{F}(x^{\prime})\|}$.
We now study the problem 
\begin{eqnarray}
 & \min_{x} & \left\langle \bar{v},x\right\rangle \label{eq:min-slab-pblm}\\
 & \mbox{s.t.} & x\in S_{i}\mbox{ for all }i\in\{1,\dots,k\}.\nonumber 
\end{eqnarray}
If the above problem were a maximization problem instead, then an
optimizer is $P_{F}(x^{\prime})$. Consider the point $P_{F}(x^{\prime})-\alpha d$,
where $d$ is the direction defined through 
\begin{equation}
\langle v_{i},d\rangle=1\mbox{ for all }i\mbox{, and }d\in\mbox{span}(\{v_{i}\}_{i=1}^{k}).\label{eq:defn-dirn-d}
\end{equation}
Since the vectors $\{v_{i}\}_{i=1}^{k}$ are linearly independent,
such a $d$ exists, and can be calculated by $d=QR^{-T}\mathbf{1}$,
where $\mathbf{1}$ is the vector of all ones, $QR$ is the QR factorization
of $V$, and $V$ is the matrix formed by concatenating the vectors
$\{v_{i}\}_{i=1}^{k}$. We can use \eqref{eq:inn-pdt-eq-b-i} and
\eqref{eq:defn-dirn-d} to calculate that 
\[
\langle v_{i},P_{F}(x^{\prime})-\alpha d\rangle=b_{i}-\alpha\mbox{ for all }i,
\]
so $P_{F}(x^{\prime})-\alpha d$ is on the other boundary of all the
hyperslabs $S_{i}$. Furthermore, since $N_{\cap_{i=1}^{k}S_{i}}(P_{F}(x^{\prime})-\alpha d)=-N_{\cap_{i=1}^{k}S_{i}}(P_{F}(x^{\prime}))$,
we have $-\bar{v}\in N_{\cap_{i=1}^{k}S_{i}}(P_{F}(x^{\prime})-\alpha d)$.
Hence $P_{F}(x^{\prime})-\alpha d$ is a minimizer of \eqref{eq:min-slab-pblm}.

We proceed to find the optimal value of \eqref{eq:min-slab-pblm}.
Since $\bar{v}$ lies in the conical hull of $\{v_{i}\}_{i=1}^{k}$,
$\bar{v}$ can be written as $\frac{V\lambda}{\|V\lambda\|}$, where
$\lambda\in\mathbb{R}_{+}^{k}$ is a vector with nonnegative elements
such that its elements sum to one. We can calculate 
\begin{eqnarray*}
\left(\frac{V\lambda}{\|V\lambda\|}\right)^{T}d & = & \frac{1}{\|V\lambda\|}\lambda^{T}V^{T}QR^{-T}\mathbf{1}\\
 & = & \frac{1}{\|V\lambda\|}\lambda^{T}R^{T}Q^{T}QR^{-T}\mathbf{1}=\frac{1}{\|V\lambda\|}\lambda^{T}\mathbf{1}=\frac{1}{\|V\lambda\|}.
\end{eqnarray*}
By the definition of $\eta$, we have $\frac{1}{\|V\lambda\|}\geq\frac{1}{\eta}$.
This means that the minimum value of \eqref{eq:min-slab-pblm} is
at least $\left\langle \bar{v},P_{F}(x^{\prime})-\alpha d\right\rangle =\left\langle \bar{v},P_{F}(x^{\prime})\right\rangle -\frac{1}{\eta}\alpha$.
Since $\bar{x}\in S_{i}$ for all $i\in\{1,\dots,k\}$, we can deduce
that $\bar{x}$ lies in the hyperslab 
\[
\{x:\langle\bar{v},x\rangle\in[\langle\bar{v},P_{F}(x^{\prime})\rangle-\nicefrac{\alpha}{\eta},\langle\bar{v},P_{F}(x^{\prime})\rangle]\}.
\]
In other words, $\bar{x}$ lies in the halfspace $\{x:\langle\bar{v},x\rangle\leq\langle\bar{v},P_{F}(x^{\prime})\rangle\}$,
and the distance from $\bar{x}$ to the boundary of this halfspace
is at most $\frac{1}{\eta}\alpha$, which is the conclusion we seek.

The final paragraph is easily deduced from the main result.\end{proof}
\begin{rem}
(The formula $\eta$) We remark that the use of the notation $\eta$
in Lemma \ref{lem:derived-halfspaces} is consistent with the notation
of \cite{Kruger_06} and related papers, where the relationship of
the constants related to the sensitivity analysis of the SIP \eqref{eq:SIP}
and linearly regular intersection are studied.
\end{rem}
We now prove our result on the convergence of Algorithm \ref{alg:basic-alg}.
\begin{thm}
\label{thm:loc-lin-conv}(Local linear convergence of general Algorithm)
Suppose $K_{l}$, where $l\in\{1,\dots,m\}$, are super-regular at
$x^{*}\in K=\cap_{l=1}^{m}K_{l}$. Suppose that $\eta$ defined by
\[
\eta:=\min\left\{ \left\Vert \sum_{i=1}^{m}v_{i}\right\Vert :v_{i}\in N_{K_{i}}(x^{*}),\, x^{*}\in K_{i},\,\sum_{i=1}^{m}\|v_{i}\|=1\right\} 
\]
 is positive. (i.e., $\eta\neq0$.) This is equivalent to $\{K_{l}\}_{l=1}^{m}$
having linear regular intersection at $x^{*}$, which in turn implies
that the local metric inequality holds at $x^{*}$. If $x_{0}$ is
sufficiently close to $x^{*}$, then Algorithm \ref{alg:basic-alg}
converges to a point in $K$ Q-linearly (i.e., at a rate bounded above
by a geometric sequence).\end{thm}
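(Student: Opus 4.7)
The strategy is to verify the two hypotheses of Lemma \ref{lem:lin-conv-backbone} applied to the outer sequence $\{x_i\}$ and read off Q-linear convergence from its conclusion (a). The preliminary equivalences in the statement are routine: positivity of $\eta$ is a rescaling of the linearly regular intersection condition \eqref{eq:CQ-1} (just take $v_l/\|v_l\|$ after normalization), and linearly regular intersection implies the local metric inequality \eqref{eq:loc-metric-ineq} as recorded after Definition \ref{def:lin-reg-int}, giving a constant $\beta$ on some neighborhood of $x^*$. Fix $\delta>0$ small (to be determined in terms of $\beta$ and $m$) and shrink a neighborhood $V$ of $x^*$ so that each $K_l$ is super-regular on $V$ with parameter $\delta$. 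Taking $x_0$ sufficiently close to $x^*$, the distance estimate of Lemma \ref{lem:lin-conv-backbone}(b) keeps every intermediate iterate $x_i^j$ and every projection $x_{i,j,l}$ inside $V$ inductively.

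\textbf{Per-iteration contraction.} The crucial use of super-regularity is that, since $x_i^{j-1}-x_{i,j,l}\in N_{K_l}(x_{i,j,l})$, the defining inequality of super-regularity yields
\[
\langle x_i^{j-1}-x_{i,j,l},\,y-x_{i,j,l}\rangle \leq \delta\,\|y-x_{i,j,l}\|\,\|x_i^{j-1}-x_{i,j,l}\|
\]
for every $y\in K_l\cap V$, and when $K_l$ is a manifold this becomes a two-sided estimate by property \eqref{eq:manifold-ppty}, consistent with the hyperplane choice of $H_{i,j,l}$. Setting $y=y_i^*:=P_K(x_i)$ shows that $y_i^*$ lies within distance $O(\delta\,d(x_i,K))$ of every $H_{i,j,l}$, and hence of $F_i^j$; let $\hat y_i^j\in F_i^j$ denote a closest point. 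The variational characterization of projection onto convex sets (derived from Proposition \ref{prop:fejer-principle} by taking $\lambda=1$) gives the Pythagorean-type inequality
\[
\|x_i^j-\hat y_i^j\|^2 \leq \|x_i^{j-1}-\hat y_i^j\|^2 - \|x_i^j-x_i^{j-1}\|^2.
\]
Moreover $F_i^j\subseteq H_{i,j,l}$ for every $l\in S_j$, so $\|x_i^j-x_i^{j-1}\|\geq\|P_{H_{i,j,l}}(x_i^{j-1})-x_i^{j-1}\|=d(x_i^{j-1},K_l)$. Since $\cup_j S_j=\{1,\dots,m\}$, the local metric inequality applied at $x_i$ supplies some pair $(j,l)$ with $d(x_i^{j-1},K_l)\geq \tfrac{1}{\beta}d(x_i,K)-O(\delta\,d(x_i,K))$; telescoping the above chain over $j=1,\dots,m$ while absorbing the $O(\delta)$ drift of the moving target $\hat y_i^j$ produces
\[
d(x_{i+1},K)^2 \leq \bigl(1-\tfrac{1}{\beta^2}+C\delta\bigr)\,d(x_i,K)^2,
\]
which is the first hypothesis of Lemma \ref{lem:lin-conv-backbone} once $\delta$ is chosen small.

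\textbf{Movement bound and principal obstacle.} The second hypothesis $\|x_{i+1}-x_i\|\leq c\,d(x_i,K)$ follows from the triangle inequality: each $\|x_i^j-x_i^{j-1}\|\leq\|x_i^{j-1}-\hat y_i^j\|\leq\|x_i^{j-1}-y_i^*\|+O(\delta\,d(x_i,K))$, and the Fej\'er chain keeps $\|x_i^j-y_i^*\|\leq\|x_i-y_i^*\|+O(\delta\,d(x_i,K))$ throughout the outer iteration, so summing the $m$ inner steps gives $\|x_{i+1}-x_i\|\leq c\,d(x_i,K)$. Lemma \ref{lem:lin-conv-backbone} then delivers Q-linear convergence to a point in $K$. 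The main technical obstacle is bookkeeping the super-regularity slack across the inner loop: because $F_i^j$ may contain halfspaces from several previous inner iterations, the accumulated $O(\delta)$ errors could in principle swamp the $1/\beta^2$ gain; the uniqueness rule \eqref{eq:def-S-i-j-2} caps the number of constraints in any $F_i^j$ at $m$, which is what permits $\delta$ to be fixed a priori as a function of $\beta$ and $m$ alone.
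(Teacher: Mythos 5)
Your overall architecture matches the paper's: verify the two hypotheses of Lemma \ref{lem:lin-conv-backbone}, use super-regularity to show that points of $K$ near $x_i$ almost satisfy each halfspace constraint, and extract a definite decrease from the inner step corresponding to the set realizing the local metric inequality. But there is a genuine gap at the pivot of your argument: the step ``$y_i^*$ lies within distance $O(\delta\,d(x_i,K))$ of every $H_{i,j,l}$, and hence of $F_i^j$.'' That ``hence'' is false in general. The distance from a point to an intersection of halfspaces is \emph{not} bounded by the maximum of its distances to the individual halfspaces; it degrades by a conditioning factor determined by how close the convex hull of the normals comes to the origin. This is exactly what the quantity $\eta$ measures and exactly what Lemma \ref{lem:derived-halfspaces} is for: the paper relaxes each $H_{i,j,l}$ by $\tfrac{\eta}{8m^4\beta^2}d(x_i,K)$ and then shows the \emph{derived} halfspace $H_{i,j}^+$ generated by projecting onto $F_i^j$ has slack only $\tfrac{1}{\eta}$ times worse, i.e.\ $\tfrac{1}{4m^4\beta^2}d(x_i,K)$, before running the cosine-rule argument against that single halfspace. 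Your route (perturbed target $\hat y_i^j\in F_i^j$ plus the Pythagorean inequality) could be made to work, but only after proving a Hoffman-type bound $d(y_i^*,F_i^j)\leq \tfrac{C}{\eta}\,\delta\,d(x_i,K)$, which is essentially Lemma \ref{lem:derived-halfspaces} in disguise. As written, your proof never uses $\eta>0$ beyond the preliminary equivalences, yet the remark following the theorem in the paper states explicitly that $\eta>0$ is needed precisely when $|S_j|>1$ and halfspaces are aggregated.

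This gap propagates into your closing claim that $\delta$ ``can be fixed a priori as a function of $\beta$ and $m$ alone'' because \eqref{eq:def-S-i-j-2} caps the number of constraints at $m$. Bounding the \emph{number} of halfspaces does not bound their \emph{conditioning}: two halfspaces with nearly antiparallel normals already make $d(\cdot,F_i^j)$ arbitrarily worse than the individual violations. The paper's choice is $\delta\leq\frac{(1-\rho)\eta}{16m^4\beta^2c}$, i.e.\ $\delta$ must shrink with $\eta$. A secondary, more easily repaired inaccuracy: your claim $d(x_i^{j-1},K_l)\geq\tfrac{1}{\beta}d(x_i,K)-O(\delta\,d(x_i,K))$ is too strong, since the drift $\|x_i^{j-1}-x_i\|$ from earlier inner steps is $O(d(x_i,K))$, not $O(\delta\,d(x_i,K))$; the paper instead argues that \emph{some} inner step has length at least $\tfrac{1}{m\beta}d(x_i,K)$ (otherwise the accumulated drift could not erase the gap $\tfrac1\beta d(x_i,K)$), and this weaker, $m$-dependent bound is what enters the contraction constant $\rho$ in \eqref{eq:linear-rho}.
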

\begin{proof}
Since the local metric inequality holds at $x^{*}$, let $\beta\geq1$
and $V$ be a neighborhood of $x^{*}$ such that 
\[
d(x,K)\leq\beta\max_{l}d(x,K_{l})\mbox{ for all }x\in V.
\]
Let \begin{subequations}
\begin{eqnarray}
\rho & = & \sqrt{1+\frac{1}{\beta^{2}m^{3}}+\frac{1}{4\beta^{4}m^{6}}-\frac{1}{\beta^{2}m^{2}}+\frac{1}{2\beta^{3}m^{5}}-\frac{1}{16\beta^{4}m^{8}}+\frac{1}{16\beta^{4}m^{6}}},\label{eq:linear-rho}\\
\mbox{ and }c & = & \sqrt{m}\sqrt{\left[1+\frac{1}{4m^{3}\beta^{2}}\right]^{2}+\frac{1}{16m^{6}\beta^{4}}}\label{eq:linear-c}
\end{eqnarray}
\end{subequations}It is clear to see that if $m\geq2$, then $\rho<1$.
Choose $\delta>0$ such that $\delta\leq\frac{(1-\rho)\eta}{16m^{4}\beta^{2}c}$.
Since $x^{*}$ is super-regular at all sets $K_{l}$, where $l\in\{1,\dots,m\}$,
we can shrink the neighborhood $V$ if necessary so that for all $l\in\{1,\dots,m\}$,
we have 
\[
\langle v,z-y\rangle\leq\delta\|v\|\|z-y\|\mbox{ for all }z,y\in K_{l}\cap V\mbox{ and }v\in N_{K_{l}}(y).
\]
By the outer semicontinuity of the normal cones, we can shrink $V$
if necessary so that for all $x\in V$, we have 
\[
\min\left\{ \left\Vert \sum_{i=1}^{m}v_{i}\right\Vert :v_{i}\in N_{K_{i}}(x),\, x\in K_{i},\,\sum_{i=1}^{m}\|v_{i}\|=1\right\} \geq\frac{\eta}{2}.
\]

Suppose $x_{0}$ is close enough to $x^{*}$ such that $\mathbb{B}(x_{0},\frac{c}{1-\rho}d(x_{0},K))\subset V$.
Provided that we prove conditions (1) and (2) in Lemma \ref{lem:lin-conv-backbone},
we have the convergence of the iterates $\{x_{i}\}$ to some point
$\bar{x}\in K$. The convergence of $\{x_{i}\}$ to $\bar{x}$ would
be at the rate suggested in Lemma \ref{lem:lin-conv-backbone}(a).

If $x\in K\cap\mathbb{B}(x_{i},\frac{c}{1-\rho}d(x_{i},K))$ and $x_{i}^{j-1},x_{i,j,l}\in\mathbb{B}(x_{i},\frac{c}{1-\rho}d(x_{i},K))$,
then 
\begin{eqnarray}
\left\langle \frac{x_{i}^{j-1}-x_{i,j,l}}{\|x_{i}^{j-1}-x_{i,j,l}\|},x-x_{i,j,l}\right\rangle  & \leq & \delta\|x-x_{i,j,l}\|\label{eq:halfspace-QP-alg}\\
 & \leq & \delta\frac{2c}{1-\rho}d(x_{i},K)\nonumber \\
 & \leq & \frac{\eta}{8m^{4}\beta^{2}}d(x_{i},K).\nonumber 
\end{eqnarray}
Define the halfspace $H_{i,j,l}^{+}$ by 
\[
H_{i,j,l}^{+}:=\left\{ x:\left\langle \frac{x_{i}^{j-1}-x_{i,j,l}}{\|x_{i}^{j-1}-x_{i,j,l}\|},x-x_{i,j,l}\right\rangle \leq\frac{\eta}{8m^{4}\beta^{2}}d(x_{i},K)\right\} .
\]
(Note that the halfspace $H_{i,j,l}$ defined in Algorithm \ref{alg:basic-alg}
is similar to $H_{i,j,l}^{+}$ with the exception that the right hand
side of the inequality is zero.) We have $K_{l}\cap\mathbb{B}(x_{i},\frac{c}{1-\rho}d(x_{i},K))\subset H_{i,j,l}^{+}$.
Note that $x_{i}^{j}$ is the projection of $x_{i}^{j-1}$ onto $F_{i}^{J}$.
Define the halfspace $H_{i,j}^{+}$ by 
\begin{equation}
H_{i,j}^{+}:=\left\{ x:\left\langle \frac{x_{i}^{j-1}-x_{i}^{j}}{\|x_{i}^{j-1}-x_{i}^{j}\|},x-x_{i}^{j}\right\rangle \leq\frac{1}{4m^{4}\beta^{2}}d(x_{i},K)\right\} .\label{eq:agg-halfspace-defn}
\end{equation}
By Lemma \ref{lem:derived-halfspaces}, we have 
\begin{equation}
K\cap\mathbb{B}(x_{i},\frac{c}{1-\rho}d(x_{i},K))\subset\cap_{(k,l)\in\tilde{S}_{i}^{j}}H_{i,k,l}^{+}\subset H_{i,j}^{+}.\label{eq:agg-halfspace-works}
\end{equation}
Note that almost exactly the same arguments works if the set $K_{l}$
is a manifold, but we may have to take $-\frac{x_{i}^{j-1}-x_{i}^{j}}{\|x_{i}^{j-1}-x_{i}^{j}\|}$
as the normal vector of $H_{i,j,l}^{+}$ instead and define $H_{i,j}^{+}$
differently, depending on the multipliers in the KKT condition.

\textbf{Claim:} 

(a) If $\|x_{i}^{j}-x_{i}^{j-1}\|\geq\frac{1}{2m^{4}\beta^{2}}d(x_{i},K)$,
then 

$\qquad\qquad d(x_{i}^{j},K)^{2}\leq d(x_{i}^{j-1},K)^{2}-\left[\|x_{i}^{j}-x_{i}^{j-1}\|-\frac{1}{4m^{4}\beta^{2}}d(x_{i},K)\right]^{2}$

$\qquad\qquad\phantom{d(x_{i}^{j},K)^{2}\leq}+\left[\frac{1}{4m^{4}\beta^{2}}d(x_{i},K)\right]^{2}.$

(b) If $\|x_{i}^{j}-x_{i}^{j-1}\|\leq\frac{1}{2m^{4}\beta^{2}}d(x_{i},K)$,
then $d(x_{i}^{j},K)\leq d(x_{i}^{j-1},K)+\frac{1}{2m^{4}\beta^{2}}d(x_{i},K)$. 

Part (b) is obvious. We now prove part (a). Let $y$ be any point
in $P_{K}(x_{i}^{j-1})$, and let $z=P_{H_{i,j}^{+}}(x_{i}^{j-1})$.
See Figure \ref{fig:apply-cosine-rule}, where $d_{2}=\frac{1}{4m^{4}\beta^{2}}d(x_{i},K)$
in view of \eqref{eq:agg-halfspace-defn}. Noting that $\angle zyx_{i}^{j}\geq\pi/2$,
we apply cosine rule to get 
\begin{eqnarray*}
 &  & d(x_{i}^{j},K)^{2}\\
 & \leq & \|y-x_{i}^{j}\|^{2}\\
 & \leq & \|y-z\|^{2}+\|z-x_{i}^{j}\|^{2}\\
 & \leq & \|y-x_{i}^{j-1}\|^{2}-\|x_{i}^{j}-z\|^{2}+\|z-x_{i}^{j}\|^{2}\\
 & = & d(x_{i}^{j-1},K)^{2}-\left[\|x_{i}^{j}-x_{i}^{j-1}\|-\frac{1}{4m^{4}\beta^{2}}d(x_{i},K)\right]^{2}+\left[\frac{1}{4m^{4}\beta^{2}}d(x_{i},K)\right]^{2}.
\end{eqnarray*}
This completes the proof of the claim.

\begin{figure}[!h]
\includegraphics[scale=0.3]{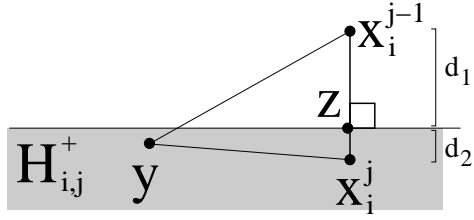}

\caption{\label{fig:apply-cosine-rule}This figure illustrates the proof in
the claim of Theorem \ref{thm:loc-lin-conv}. Note that $d_{1}=\|x_{i}^{j}-x_{i}^{j-1}\|-\frac{1}{4m^{4}\beta^{2}}d(x_{i},K)$
and $d_{2}=\frac{1}{4m^{4}\beta^{2}}d(x_{i},K)$.}

\end{figure}

It now remains the prove conditions (1) and (2) of Lemma \ref{lem:lin-conv-backbone}.
By local metric inequality, there is some $j\in\{1,\dots,m\}$ such
that $d(x_{i},K_{j})\geq\frac{1}{\beta}d(x_{i},K)$. Hence there is
a distance $\|x_{i}^{j}-x_{i}^{j-1}\|$ that will be at least $\frac{1}{m\beta}d(x_{i},K)$.
Making use of the claim earlier, we have the following estimate of
$d(x_{i+1},K)$. 
\begin{eqnarray}
 &  & d(x_{i+1},K)^{2}\label{eq:first-ineq-block}\\
 & \leq & \left[d(x_{i},K)+\frac{m}{2m^{4}\beta^{2}}d(x_{i},K)\right]^{2}-\left[\frac{1}{m\beta}d(x_{i},K)-\frac{1}{4m^{4}\beta^{2}}d(x_{i},K)\right]^{2}\nonumber \\
 &  & +\left[\frac{m}{4m^{4}\beta^{2}}d(x_{i},K)\right]^{2}\nonumber \\
 & = & \left[1+\frac{1}{\beta^{2}m^{3}}+\frac{1}{4\beta^{4}m^{6}}-\frac{1}{\beta^{2}m^{2}}+\frac{1}{2\beta^{3}m^{5}}-\frac{1}{16\beta^{4}m^{8}}+\frac{1}{16\beta^{4}m^{6}}\right]d(x_{i},K)^{2}\nonumber \\
 & = & \rho^{2}d(x_{i},K)^{2}.\nonumber 
\end{eqnarray}
This proves that $d(x_{i+1},K)\leq\rho d(x_{i},K)$. Next, 
\begin{eqnarray}
 &  & \|x_{i+1}-x_{i}\|\label{eq:2nd-ineq-block}\\
 & \leq & \sum_{j=1}^{m}\|x_{i}^{j}-x_{i}^{j-1}\|\nonumber \\
 & \leq & \frac{1}{4m^{3}\beta^{2}}d(x_{i},K)+\sum_{j=1}^{m}\max\left\{ \|x_{i}^{j}-x_{i}^{j-1}\|-\frac{1}{4m^{4}\beta^{2}}d(x_{i},K),0\right\} \nonumber \\
 & \leq & \frac{1}{4m^{3}\beta^{2}}d(x_{i},K)+\underbrace{\sqrt{m\sum_{j=1}^{m}\max\left\{ \|x_{i}^{j}-x_{i}^{j-1}\|-\frac{1}{4m^{4}\beta^{2}}d(x_{i},K),0\right\} ^{2}}}_{(*)}.\nonumber 
\end{eqnarray}
By the analysis in \eqref{eq:first-ineq-block}, the fact that $d(x_{i+1},K)^{2}\geq0$
gives 
\begin{eqnarray*}
0 & \leq & d(x_{i+1},K)^{2}\\
 & \leq & \left[d(x_{i},K)+m\frac{1}{2m^{4}\beta^{2}}d(x_{i},K)\right]^{2}\\
 &  & +\left[\frac{m}{4m^{4}\beta^{2}}d(x_{i},K)\right]^{2}-\sum_{j=1}^{m}\max\left\{ \|x_{i}^{j}-x_{i}^{j-1}\|-\frac{1}{4m^{4}\beta^{2}}d(x_{i},K),0\right\} ^{2}.
\end{eqnarray*}
We thus deduce that the term marked $(*)$ in \eqref{eq:2nd-ineq-block}
is at most 
\[
\sqrt{m}\sqrt{\left[1+\frac{1}{2m^{3}\beta^{2}}\right]^{2}+\frac{1}{16m^{6}\beta^{4}}}d(x_{i},K).
\]
Thus the constant $c$ in Lemma \ref{lem:lin-conv-backbone} can be
taken to be what was given in \eqref{eq:linear-c}. \end{proof}
\begin{rem}
(On the condition $\eta>0$ in Theorem \ref{thm:loc-lin-conv}) The
condition $\eta>0$ is required in the proof of Theorem \ref{thm:loc-lin-conv}
only when $|S_{l}|>1$, when halfspaces are aggregated. So in the
case of alternating projections, the weaker condition of local metric
inequality is sufficient.
\end{rem}

\section{\label{sec:Newton-method}Connections with the Newton method}

To find a point in $\{x\in\mathbb{R}^{n}:F(x)=0\}$ for some smooth
$F:\mathbb{R}^{n}\to\mathbb{R}^{m}$, the method of choice is to use
the Newton method provided that the linear system in the Newton method
can be solved quickly enough. Note that the set $\{x:F(x)=0\}$ can
be written as the intersection of the manifolds $M_{j}:=\{x:F_{j}(x)=0\}$
for $j\in\{1,\dots,m\}$, where $F_{j}:\mathbb{R}^{n}\to\mathbb{R}$
is the $j$th component of $F(\cdot)$. Note that the manifolds $M_{j}$
are of codimension 1. This section gives conditions for which the
SHQP strategy can converge superlinearly or quadratically when the
sets involved satisfy the conditions for fast convergence in the Newton
method. 

The following result was proved in \cite{cut_Pang12} for convex sets,
but is readily generalized to Clarke regular sets, which we do so
now.
\begin{thm}
\label{thm:radiality}(Supporting hyperplane near a point) Suppose
$C\subset\mathbb{R}^{n}$ is Clarke regular, and let $\bar{x}\in C$.
Then for any $\epsilon>0$, there is a $\delta>0$ such that for any
point $x\in[\mathbb{B}_{\delta}(\bar{x})\cap C]\backslash\{\bar{x}\}$
and supporting hyperplane $A$ of $C$ with unit normal $v\in N_{C}(x)$
at the point $x$, we have 
\begin{equation}
\left\langle v,x-\bar{x}\right\rangle \leq\epsilon\|\bar{x}-x\|.\label{eq:little-SOSH}
\end{equation}
\end{thm}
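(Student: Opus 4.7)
The plan is to prove this by contradiction, extract a convergent subsequence of unit normals, and use Clarke regularity to translate a limiting normal at $\bar{x}$ into a regular normal, which by definition forces the desired estimate asymptotically.

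Concretely, suppose the conclusion fails. Then there exist $\epsilon_{0}>0$, a sequence $x_{i}\to\bar{x}$ with $x_{i}\in C\setminus\{\bar{x}\}$, and unit vectors $v_{i}\in N_{C}(x_{i})$ such that
\[
\langle v_{i},x_{i}-\bar{x}\rangle>\epsilon_{0}\|x_{i}-\bar{x}\|\quad\text{for all }i.
\]
Since the $v_{i}$ lie on the unit sphere, pass to a subsequence (not relabelled) so that $v_{i}\to v$ with $\|v\|=1$. By the definition \eqref{eq:def-normal-2} of the limiting normal cone (and the fact that $N_C$ is outer semicontinuous because it is already the closure under such limits of regular normals), we obtain $v\in N_{C}(\bar{x})$.

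Now invoke Clarke regularity at $\bar{x}$: since $\hat{N}_{C}(\bar{x})=N_{C}(\bar{x})$, we have $v\in\hat{N}_{C}(\bar{x})$, and the defining inequality \eqref{eq:def-normal-1} gives
\[
\langle v,x_{i}-\bar{x}\rangle\leq o(\|x_{i}-\bar{x}\|).
\]
Decomposing
\[
\langle v_{i},x_{i}-\bar{x}\rangle=\langle v,x_{i}-\bar{x}\rangle+\langle v_{i}-v,x_{i}-\bar{x}\rangle,
\]
dividing through by $\|x_{i}-\bar{x}\|$, and using Cauchy--Schwarz on the second term gives
\[
\frac{\langle v_{i},x_{i}-\bar{x}\rangle}{\|x_{i}-\bar{x}\|}\leq \frac{o(\|x_{i}-\bar{x}\|)}{\|x_{i}-\bar{x}\|}+\|v_{i}-v\|\longrightarrow 0,
\]
which contradicts the strict lower bound $\epsilon_{0}$ for all $i$ large enough.

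The only subtlety is making sure step 2 is legitimate when the $v_{i}$ are only known to lie in the \emph{limiting} normal cones $N_{C}(x_{i})$ rather than in the regular ones $\hat{N}_{C}(x_{i})$; this is fine because the limiting normal cone mapping $N_{C}$ is outer semicontinuous by construction, so limits of limiting normals at converging base points are again limiting normals at the limit. Everything else is bookkeeping, so I expect no real obstacle beyond cleanly stating the contradiction argument.
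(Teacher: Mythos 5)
Your proof is correct and follows essentially the same route as the paper's: the paper argues directly by choosing $\delta$ so that every unit normal $v\in N_{C}(x)$ with $x$ near $\bar{x}$ is within $\epsilon/2$ of some $\bar{v}\in N_{C}(\bar{x})=\hat{N}_{C}(\bar{x})$, and then applies the identical two-term decomposition $\langle v,x-\bar{x}\rangle=\langle v-\bar{v},x-\bar{x}\rangle+\langle\bar{v},x-\bar{x}\rangle$. Your contradiction/subsequence phrasing merely makes explicit the outer semicontinuity and uniformity that the paper's choice of $\delta$ leaves implicit, and your handling of the limiting-versus-regular normal cone subtlety is sound.
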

\begin{proof}
Let $\delta$ be small enough so that for any $x\in[\mathbb{B}_{\delta}(\bar{x})\cap C]\backslash\{\bar{x}\}$
and unit normal $v\in N_{C}(x)$, we can find $\bar{v}\in N_{C}(\bar{x})$
such that $\|v-\bar{v}\|<\frac{\epsilon}{2}$ and that $\langle\bar{v},x-\bar{x}\rangle\leq\frac{\epsilon}{2}\|x-\bar{x}\|$.
Then we have 
\begin{eqnarray*}
\langle v,x-\bar{x}\rangle & = & \langle v-\bar{v},x-\bar{x}\rangle+\langle\bar{v},x-\bar{x}\rangle\\
 & \leq & \|v-\bar{v}\|\|x-\bar{x}\|+\frac{\epsilon}{2}\|x-\bar{x}\|\\
 & \leq & \epsilon\|x-\bar{x}\|.
\end{eqnarray*}
Thus we are done.
\end{proof}
We identify a property that will give multiple-term quadratic convergence.
Compare this property to that in Theorem \ref{thm:radiality}.
\begin{defn}
\label{def:SOSH}(Second order supporting hyperplane property) Suppose
$C\subset\mathbb{R}^{n}$ is a closed convex set, and let $\bar{x}\in C$.
We say that $C$ has the \emph{second order supporting hyperplane
(SOSH) property at $\bar{x}$ }(or more simply, $C$ is SOSH at $\bar{x}$)
if there are $\delta>0$ and $M>0$ such that for any point $x\in[\mathbb{B}_{\delta}(\bar{x})\cap C]\backslash\{\bar{x}\}$
and $v\in N_{C}(x)$ such that $\|v\|=1$, we have 
\begin{equation}
\left\langle v,x-\bar{x}\right\rangle \leq M\|\bar{x}-x\|^{2}.\label{eq:SOSH-1}
\end{equation}

\end{defn}
It is clear how \eqref{eq:little-SOSH} compares with \eqref{eq:SOSH-1}.
The next two results show that SOSH is prevalent in applications.
\begin{prop}
\label{prop:C2-SOSH}(Smoothness implies SOSH) Suppose function $f:\mathbb{R}^{n}\to\mathbb{R}$
is $\mathcal{C}^{2}$ at $\bar{x}$. Then the set $C=\{x\mid f(x)\leq0\}$
is SOSH at $\bar{x}$.\end{prop}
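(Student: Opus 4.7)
The plan is to split into cases by the value of $f(\bar x)$. If $f(\bar x) < 0$, then by continuity of $f$ there is a neighborhood of $\bar x$ lying in $\intr C$, so $N_C(x) = \{0\}$ on that neighborhood and \eqref{eq:SOSH-1} is vacuous (no unit normals exist). The substantive case is $f(\bar x) = 0$, which I would handle under the implicit nondegeneracy $\nabla f(\bar x) \neq 0$; this is what places $C$ into the Clarke regular (indeed super-regular) setting of the paper, and without it $C$ need not even be Clarke regular at $\bar x$. Near such a point the implicit function theorem gives that $\{f=0\}$ is a smooth hypersurface, and $N_C(x) = \{\lambda \nabla f(x) : \lambda \geq 0\}$ for $x$ on this hypersurface while $N_C(x) = \{0\}$ for $x$ with $f(x) < 0$. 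So the only case needing work is $x \in \mathbb{B}_\delta(\bar x) \cap C$ with $f(x) = 0$ and unit normal $v = \nabla f(x)/\|\nabla f(x)\|$.

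For the core estimate, I would use a first-order Taylor expansion along the segment $[x,\bar x]$. Since $f(x) = f(\bar x) = 0$,
\[
0 = f(\bar x) - f(x) = \int_0^1 \langle \nabla f(x + t(\bar x - x)), \bar x - x\rangle\, dt.
\]
Subtracting $\langle \nabla f(x), \bar x - x\rangle$ from both sides gives
\[
\langle \nabla f(x), x - \bar x\rangle = \int_0^1 \langle \nabla f(x + t(\bar x - x)) - \nabla f(x), \bar x - x\rangle\, dt.
\]
Because $f$ is $\mathcal{C}^2$, $\nabla f$ is Lipschitz with some constant $L$ on a small enough neighborhood of $\bar x$, so the integrand is pointwise bounded in absolute value by $L t\|\bar x - x\|^2$; integrating yields $|\langle \nabla f(x), x - \bar x\rangle| \leq \tfrac{L}{2}\|\bar x - x\|^2$. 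Dividing by $\|\nabla f(x)\|$, which by continuity is bounded below by some $c > 0$ on a sufficiently small ball around $\bar x$, produces $\langle v, x - \bar x\rangle \leq \tfrac{L}{2c}\|\bar x - x\|^2$, which is \eqref{eq:SOSH-1} with $M := L/(2c)$.

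The main (and really only) obstacle is bookkeeping: choosing $\delta$ small enough that simultaneously (i) $\nabla f$ is Lipschitz on $\mathbb{B}_\delta(\bar x)$, (ii) $\|\nabla f\|$ is bounded below by some $c > 0$ there, and (iii) the implicit function theorem characterization of $N_C$ on $\mathbb{B}_\delta(\bar x) \cap C$ is valid. Beyond this, the proposition is essentially the familiar fact that a $\mathcal{C}^2$ function agrees with its linearization to second order, which is exactly what SOSH asks of the tangent hyperplane at a boundary point of $C$.
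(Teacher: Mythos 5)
Your proof is correct and follows essentially the same route as the paper's: both reduce to the case $f(x)=f(\bar x)=0$ with $v=\nabla f(x)/\|\nabla f(x)\|$ and then show $\langle\nabla f(x),x-\bar x\rangle=O(\|x-\bar x\|^2)$ via a second-order Taylor argument (you use the integral remainder plus local Lipschitzness of $\nabla f$, the paper expands with the Hessian at $\bar x$ explicitly). If anything, your write-up is tidier: you make explicit the nondegeneracy $\nabla f(\bar x)\neq 0$ and the lower bound on $\|\nabla f\|$ needed to pass to the unit normal, both of which the paper's proof uses only implicitly.
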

\begin{proof}
Consider $\bar{x},x\in C$. In order for the problem to be meaningful,
we shall only consider the case where $f(\bar{x})=0$. We also assume
that $f(x)=0$ so that $C$ has a tangent hyperplane at $x$. An easy
calculation gives $N_{C}(\bar{x})=\mathbb{R}_{+}\{\nabla f(\bar{x})\}$
and $N_{C}(x)=\mathbb{R}_{+}\{\nabla f(x)\}$.

Without loss of generality, let $\bar{x}=0$. We have 
\begin{eqnarray*}
 &  & f(x)=f(0)+\nabla f(0)x+\frac{1}{2}x^{T}\nabla^{2}f(0)x+o(\|x\|^{2}).\\
 & \Rightarrow & f(0)x+\frac{1}{2}x^{T}\nabla^{2}f(0)x=o(\|x\|^{2}).
\end{eqnarray*}
Since $f(x)=f(0)=0$ and $[\nabla f(0)-\nabla f(x)]x=x^{T}\nabla^{2}f(0)x+o(\|x\|^{2})$,
we have 
\[
-\nabla f(x)(x)=[\nabla f(0)-\nabla f(x)]x+\frac{1}{2}x^{T}\nabla^{2}f(0)x+o(\|x\|^{2})=O(\|x\|^{2}).
\]
 Therefore, we are done. \end{proof}
\begin{prop}
\label{prop:SOSH-intersect}(SOSH under intersection) Suppose $K_{l}\subset\mathbb{R}^{n}$
are closed sets that are SOSH at $\bar{x}$ for $l\in\{1,\dots,m\}$.
Let $K:=\cap_{l=1}^{m}K_{l}$, and suppose that $\{K_{l}\}_{l=1}^{m}$
satisfy the linear regular intersection property at $\bar{x}$. Then
$K$ is SOSH at $\bar{x}$.\end{prop}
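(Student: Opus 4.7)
The plan is to decompose any unit normal $v\in N_K(x)$ as $v=\sum_{l=1}^m v_l$ with $v_l\in N_{K_l}(x)$, invoke each $K_l$'s SOSH bound on $\langle v_l,x-\bar x\rangle$, and then aggregate. Two ingredients are needed: the sum rule for normal cones at $x$ (not just at $\bar x$), and a uniform bound on $\sum_l\|v_l\|$ in terms of $\|v\|=1$.

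First, I would argue that the quantity
\[
\eta(x):=\min\Big\{\Big\|\sum_{l=1}^m v_l\Big\|:v_l\in N_{K_l}(x),\ \sum_{l=1}^m\|v_l\|=1\Big\}
\]
is lower semicontinuous at $\bar x$. This is a routine compactness argument using outer semicontinuity of the normal cone maps $N_{K_l}(\cdot)$, which is available since each $K_l$ is super-regular (hence Clarke regular). Consequently, there is a neighborhood $U$ of $\bar x$ on which $\eta(x)\ge\eta(\bar x)/2=:\bar\eta>0$. On $U$, linearly regular intersection holds pointwise, so the Mordukhovich/Clarke sum rule \cite[Theorem 6.42]{RW98} gives $N_K(x)\subseteq\sum_{l=1}^m N_{K_l}(x)$.

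Next, fix $x\in K\cap U$ and a unit normal $v\in N_K(x)$. Pick the decomposition $v=\sum_l v_l$, $v_l\in N_{K_l}(x)$, that minimizes $s:=\sum_l\|v_l\|$. Normalizing by $s$ produces a feasible point of the $\eta(x)$-minimization with objective value $1/s$, so by definition of $\eta$, $1/s\ge\bar\eta$, i.e.\ $s\le1/\bar\eta$. Now apply SOSH of each $K_l$ with constants $\delta_l,M_l$: for $x\in\mathbb{B}_{\delta_l}(\bar x)\setminus\{\bar x\}$, one has $\langle v_l,x-\bar x\rangle\le M_l\|v_l\|\|x-\bar x\|^2$ (this handles $v_l\ne 0$; the case $v_l=0$ is trivial). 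Summing over $l$ on $x\in K\cap U\cap\bigcap_l\mathbb{B}_{\delta_l}(\bar x)$,
\[
\langle v,x-\bar x\rangle=\sum_{l=1}^m\langle v_l,x-\bar x\rangle\le\Big(\max_l M_l\Big)\,s\,\|x-\bar x\|^2\le\frac{\max_l M_l}{\bar\eta}\,\|x-\bar x\|^2,
\]
which is SOSH of $K$ at $\bar x$ with constants $M=(\max_l M_l)/\bar\eta$ and $\delta$ equal to the intersection radius.

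The main obstacle is the first ingredient: ensuring that linearly regular intersection, together with applicability of the sum rule, persists uniformly on a neighborhood of $\bar x$, rather than only at the base point. Once the $\bar\eta$ lower bound is in hand, everything else is bookkeeping. A minor but essential secondary point is that one must take the \emph{minimum-sum} decomposition in the sum rule (not an arbitrary one) in order to extract the bound $s\le 1/\bar\eta$ from the definition of $\eta(x)$.
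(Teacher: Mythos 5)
Your proof is correct and follows essentially the same route as the paper's: both decompose $v\in N_{K}(x)$ via the normal-cone intersection rule, establish a uniform bound on the decomposition near $\bar{x}$ (your lower semicontinuity of $\eta(\cdot)$ plays exactly the role of the paper's Claims 1 and 2, proved by the same outer-semicontinuity/compactness contradiction), and then sum the individual SOSH estimates. One small remark: the minimum-sum decomposition is not actually needed, since $\eta(x)$ lower-bounds the objective at \emph{every} feasible point of its defining program, so any decomposition $v=\sum_{l}v_{l}$ with $\|v\|=1$ already satisfies $\sum_{l}\|v_{l}\|\le 1/\bar{\eta}$.
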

\begin{proof}
Since each $K_{l}$ is SOSH at $\bar{x}$, we can find $\delta>0$
and $M>0$ such that for all $l\in\{1,\dots,m\}$ and $x\in K_{l}\cap\mathbb{B}_{\delta}(\bar{x})$
and $v\in N_{K_{l}}(x)$, we have 
\[
\left\langle v,x-\bar{x}\right\rangle \leq M\|v\|\|\bar{x}-x\|^{2}.
\]

\textbf{\uline{Claim 1: We can reduce $\delta>0$ if necessary
so that}}
\begin{eqnarray}
 &  & \sum_{l=1}^{m}v_{l}=0,\, v_{l}\in N_{K_{l}}(x)\mbox{ and }x\in K\cap\mathbb{B}_{\delta}(\bar{x})\label{eq:CQ-all-x}\\
 &  & \qquad\qquad\mbox{ implies }v_{l}=0\mbox{ for all }l\in\{1,\dots,m\}.\nonumber 
\end{eqnarray}
Suppose otherwise. Then we can find $\{x_{i}\}_{i=1}^{\infty}\in K$
such that $\lim x_{i}=\bar{x}$ and for all $i>0$, there exists $v_{l,i}\in N_{K_{l}}(x_{i})$
such that $\sum_{l=1}^{m}v_{l,i}=0$ but not all $v_{l,i}=0$. We
can normalize so that $\|v_{l,i}\|\leq1$, and for each $i$, $\max_{l}\|v_{l,i}\|=1$.
By taking a subsequence if necessary, we can assume that $\lim v_{l,i}$,
say $\bar{v}_{l}$, exists for all $l$. Not all $\bar{v}_{l}$ can
be zero, but $\sum_{l=1}^{m}\bar{v}_{l}=0$. The outer semicontinuity
of the normal cone mapping implies that $\bar{v}_{l}\in N_{K_{l}}(\bar{x})$.
This contradicts the linear regular intersection property, which ends
the proof of Claim 1.

\textbf{\uline{Claim 2: There exists a constant $M^{\prime}$ such
that whenever $x\in\mathbb{B}_{\delta}(\bar{x})\cap K$, $v_{l}\in N_{K_{l}}(x)$
and $v=\sum_{l=1}^{m}v_{l}$, then $\max\|v_{l}\|\leq M^{\prime}\|v\|$. }}

Suppose otherwise. Then for each $i$, there exists $x_{i}\in\mathbb{B}_{\delta}(\bar{x})\cap K$
and $\tilde{v}_{l,i}\in N_{K_{l}}(x_{i})$ such that $\tilde{v}_{i}=\sum_{l=1}^{m}\tilde{v}_{l,i}$,
$\|\tilde{v}_{i}\|\leq\frac{1}{i}$, and $\max_{l}\|\tilde{v}_{l,i}\|=1$
for all $i$. As we take limits to infinity, this would imply that
\eqref{eq:CQ-all-x} is violated, a contradiction. This ends the proof
of Claim 2.

Since \eqref{eq:CQ-all-x} is satisfied, this means that $N_{K}(x)=\sum_{l=1}^{m}N_{K_{l}}(x)$
for all $x\in\mathbb{B}_{\delta}(\bar{x})\cap K$ by the intersection
rule for normal cones in \cite[Theorem 6.42]{RW98}. Then each $v\in N_{K}(x)$
can be written as a sum of elements in $N_{K_{l}}(x)$, say $v=\sum_{l=1}^{m}v_{l}$,
where $v_{l}\in N_{K_{l}}(x)$, and $\max\|v_{l}\|\leq M^{\prime}\|v\|$.
Then 
\begin{eqnarray*}
\langle v,x-\bar{x}\rangle & = & \sum_{l=1}^{m}\langle v_{l},x-\bar{x}\rangle\\
 & \leq & M\|\bar{x}-x\|^{2}\sum_{l=1}^{m}\|v_{l}\|\quad\leq\quad M\|\bar{x}-x\|^{2}mM^{\prime}\|v\|.
\end{eqnarray*}
Thus we are done.
\end{proof}
We now make a connection to the Newton method. Consider the mass
projection algorithm.
\begin{thm}
(Connection to Newton method) Consider Algorithm \ref{alg:basic-alg}
for the case when $S_{1}=\{1,\dots,m\}$ and $S_{j}=\emptyset$ for
all $j\in\{2,\dots,m\}$ at all iterations $i$, and $\tilde{\ensuremath{S}}_{i}^{j}=\{j\}\times S_{j}$
for all $j\in\{1,\dots,m\}$. See Remark \ref{rem:mass-proj}. Let
$x^{*}\in K:=\cap_{l=1}^{m}K_{l}$. Suppose the following hold
\begin{enumerate}
\item Each set $K_{l}$ is super-regular. 
\item For each $l\in\{1,\dots,m\}$, $K_{l}$ is either a manifold, or $N_{K_{l}}(x)$
contains at most one point of norm 1 for all $x\in K_{l}$ near $x^{*}$. 
\item The sets $\{K_{l}\}_{l=1}^{m}$ has linearly regular intersection
at $x^{*}$. 
\end{enumerate}
Then provided $x_{0}$ is close enough to $x^{*}$, the convergence
of the iterates $\{x_{i}\}$ to some $\bar{x}\in K$ is superlinear.
Furthermore, the convergence is quadratic if all the sets $K_{i}$
satisfy the SOSH property.\end{thm}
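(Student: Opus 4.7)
The plan is to bootstrap from Theorem 3.7 (local linear convergence), then sharpen the rate using the radiality estimate of Theorem 4.1 (or SOSH in the quadratic case). Under the three stated hypotheses, the linearly regular intersection condition delivers $\eta>0$, so Theorem 3.7 already yields that, for $x_0$ close enough to $x^*$, the iterates $\{x_i\}$ converge Q-linearly to some $\bar x\in K$. By outer semicontinuity of the normal cones, super-regularity, uniqueness of the unit normal (or the manifold structure), and linearly regular intersection still hold on a neighborhood of $\bar x$, with some positive constant $\eta'$. So from now on I may work as if $\bar x$ plays the role of $x^*$.

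The key structural observation is that under hypothesis~(2), the unit outward normal $v_{i,l}=(x_i-x_{i,1,l})/\|x_i-x_{i,1,l}\|$ of $H_{i,1,l}$ converges to a \emph{fixed} unit vector $\bar v_l\in N_{K_l}(\bar x)$: in the non-manifold case because $N_{K_l}$ contains at most one unit normal near $\bar x$ and is outer semicontinuous, and in the manifold case because the tangent hyperplane varies continuously. Apply Theorem 4.1 to each $K_l$ at $\bar x$: for every $\epsilon>0$, once $i$ is large enough,
\begin{equation*}
d\bigl(\bar x,\partial H_{i,1,l}\bigr)=\bigl|\langle v_{i,l},\bar x-x_{i,1,l}\rangle\bigr|\leq \epsilon\,\|x_{i,1,l}-\bar x\|\leq 2\epsilon\,\|x_i-\bar x\|,
\end{equation*}
where the last step uses $\|x_i-x_{i,1,l}\|=d(x_i,K_l)\leq\|x_i-\bar x\|$. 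Under SOSH, the same computation replaces $\epsilon$ by $M\|x_{i,1,l}-\bar x\|\leq 2M\|x_i-\bar x\|$, yielding a squared bound $d(\bar x,\partial H_{i,1,l})=O(\|x_i-\bar x\|^2)$.

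The crux is then a Hoffman/error-bound-type inequality for the polyhedron $P_i:=\cap_l H_{i,1,l}$,
\begin{equation*}
\|x_{i+1}-\bar x\|\;\leq\; C\,\max_l d\bigl(\bar x,\partial H_{i,1,l}\bigr),
\end{equation*}
with $C$ depending only on $\eta'$. I would derive this by writing the KKT optimality conditions at $x_{i+1}=P_{P_i}(x_i)$, namely $x_i-x_{i+1}=\sum_l \lambda_{i,l}v_{i,l}$ (with $\lambda_{i,l}\geq 0$ for halfspace constraints and $\lambda_{i,l}\in\mathbb R$ for manifold hyperplanes) and $\langle v_{i,l},x_{i+1}-x_{i,1,l}\rangle=0$ whenever $\lambda_{i,l}\neq 0$. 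The condition $\eta'>0$ applied to the limiting normals $\bar v_l$ bounds the Gram matrix of active normals below, and hence bounds the multipliers $\lambda_{i,l}$ uniformly in terms of $\|x_i-x_{i+1}\|$; decomposing $x_{i+1}-\bar x$ along span$\{v_{i,l}\}$ and its orthogonal complement, the component along the normals is controlled by $\max_l d(\bar x,\partial H_{i,1,l})$ (from the active-constraint equations) while the orthogonal component vanishes at $\bar x$ because $P_i$ is $(n-r)$-dimensional near active faces and $\bar x$ is near $P_i$. Lemma 3.5 can be invoked in the same spirit as a substitute for Hoffman if preferred.

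Putting the pieces together yields $\|x_{i+1}-\bar x\|\leq 2C\epsilon\,\|x_i-\bar x\|$ for arbitrarily small $\epsilon$, i.e.\ superlinear convergence; under SOSH the same chain gives $\|x_{i+1}-\bar x\|\leq 4CM\,\|x_i-\bar x\|^2$, i.e.\ quadratic convergence. The main obstacle is the Hoffman-type step: one must keep the multipliers $\lambda_{i,l}$ bounded uniformly along the sequence, which is exactly where the positivity of $\eta'$ (continuous in the base point by outer semicontinuity of the normal cones, and preserved by the convergence $v_{i,l}\to\bar v_l$) is used in an essential way.
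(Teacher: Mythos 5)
Your overall architecture (bootstrap linear convergence from Theorem \ref{thm:loc-lin-conv}, then sharpen with Theorem \ref{thm:radiality} or SOSH) matches the paper's in spirit, and your first two steps are sound. But the crux step --- the Hoffman-type inequality $\|x_{i+1}-\bar x\|\leq C\max_l d(\bar x,\partial H_{i,1,l})$ --- is not established by your argument, and as justified it is false. The projection of $x_i$ onto the polyhedron $P_i$ moves $x_i$ only along the span of the \emph{active} normals; the component of $x_i-\bar x$ orthogonal to that span passes through to $x_{i+1}-\bar x$ unchanged, and a priori it is of order $\|x_i-\bar x\|$, not of order $\max_l d(\bar x,\partial H_{i,1,l})=O(\epsilon\|x_i-\bar x\|)$. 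Your justification that ``the orthogonal component vanishes because $\bar x$ is near $P_i$'' conflates $\bar x$ being near the \emph{set} $P_i$ with $\bar x$ being near the particular point $P_{P_i}(x_i)$. Concretely, take two transversal surfaces in $\mathbb{R}^3$ meeting in a curve: $P_i$ is a line, $\bar x$ lies within $O(\epsilon\|x_i-\bar x\|)$ of that line, but the foot of the perpendicular from $x_i$ can land anywhere along it. (A second, smaller problem: only the active normals appear in the KKT system, so your Gram-matrix bound says nothing about the components along normals of inactive constraints.)

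What rescues the theorem is that the tangential drift must be controlled \emph{a posteriori} through the limit, and this is exactly the route the paper takes. Lemma \ref{lem:lin-conv-backbone}(a) together with the local metric inequality gives $\|x_{i+1}-\bar x\|\leq\frac{c}{1-\rho}\,d(x_{i+1},K)\leq\kappa\,d(x_{i+1},K_j)$ for the index $j$ realizing the largest distance; so the quantity that must be shown small is $d(x_{i+1},K_j)$, not the distance from $\bar x$ to the hyperplanes. The paper then squeezes $\langle v_j^{\circ},x_{i+1}\rangle$ from two sides: from below by $\tfrac{1}{4\kappa}\|x_{i+1}\|$, using the normal at $x_{i+1}$'s \emph{own} projection onto $K_j$ together with hypothesis (2) to identify that normal with $v_j^{\circ}$ and super-regularity to control $\langle v_j^+,\bar x - x_{i+1,1,j}\rangle$; and from above by $\epsilon\cdot O(\|x_i\|)$, using the fact that $x_{i+1}$ lies in the halfspace $H_{i,1,j}$ together with Theorem \ref{thm:radiality} (or SOSH). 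Combining the two bounds yields $\|x_{i+1}\|\leq\frac{4\kappa\epsilon}{\sqrt{1-\delta^2}}\|x_i\|$. If you want to keep your framework, redirect your error bound so that it targets $\max_l d(x_{i+1},K_l)$ rather than $\|x_{i+1}-\bar x\|$ directly, and then convert via Lemma \ref{lem:lin-conv-backbone}(a); the lower bound on $\langle v_j^{\circ},x_{i+1}\rangle$ is the piece your proposal is missing.
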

\begin{proof}
By Theorem \ref{thm:loc-lin-conv}, the convergence of the iterates
$\{x_{i}\}$ to $\bar{x}$ is assured. What remains is to prove that
the convergence is actually superlinear, or quadratic under the additional
assumption. Without loss of generality, let $\bar{x}=0$. We first
prove the superlinear convergence. The proof in Theorem \ref{thm:loc-lin-conv}
assures that there is some $\beta\geq1$ such that $d(x_{i},K)\leq\beta\max_{l}d(x_{i},K_{l})$
for all iterates $x_{i}$.

Let $x_{i}$ be an iterate. Recall that $x_{i,1,j}=P_{K_{j}}(x_{i})$.
The projection of $x_{i}$ onto the polyhedron gives $x_{i+1}$. Let
$v_{j}^{+}$ be the unit normal in $N_{K_{j}}(x_{i+1,1,j})$ in the
direction of $x_{i+1}-x_{i+1,1,j}$, and let $v_{j}^{\circ}$ be the
unit normal in $N_{K_{j}}(x_{i,1,j})$ that is close to $v_{j}^{+}$. 

The proof of Theorem \ref{thm:loc-lin-conv} uses Lemma \ref{lem:lin-conv-backbone}.
Hence there are constants $c$ and $\rho\in(0,1)$ such that $\|x_{i}\|\leq\frac{c}{1-\rho}d(x_{i},K)$
for all $i$. By local metric inequality, let the index $j$ be such
that $d(x_{i+1},K)\leq\beta d(x_{i+1},K_{j})$. We let $\kappa=\frac{c\beta}{(1-\rho)}$.
Then 
\begin{eqnarray}
\langle v_{j}^{+},x_{i+1}-x_{i+1,1,j}\rangle & = & \|x_{i+1}-x_{i+1,1,j}\|\label{eq:first-chain}\\
 & = & d(x_{i+1},K_{j})\geq\frac{1}{\beta}d(x_{i+1},K)\geq\frac{1}{\kappa}\|x_{i+1}\|.\nonumber 
\end{eqnarray}
Consider the neighborhood $U$ such that if $x\in U$ and $v\in N_{K_{j}}(x)\backslash\{0\}$,
then ${\|\frac{v}{\|v\|}-\frac{\bar{v}}{\|\bar{v}\|}\|}\leq\frac{1}{4\kappa}$
for some $\bar{v}\in N_{K_{j}}(\bar{x})\backslash\{0\}$. If $i$
is large enough, then $x_{i}\in U$ and $x_{i,1,j}\in U$ for all
$j\in\{1,\dots,m\}$, which leads to 
\begin{equation}
\|v_{j}^{\circ}-v_{j}^{+}\|\leq\|v_{j}^{\circ}-\bar{v}_{j}\|+\|v_{j}^{+}-\bar{v}_{j}\|\leq\frac{1}{2\kappa},\label{eq:second-chain}
\end{equation}
where $\bar{v}_{j}$ is the appropriate unit vector in $N_{K_{j}}(\bar{x})$.
For any $\delta>0$, we can reduce the neighborhood $U$ if necessary
so that by super-regularity, 
\begin{equation}
\langle v_{j}^{+},0-x_{i+1,1,j}\rangle\leq\delta\|x_{i+1,1,j}\|.\label{eq:super-reg-conseq}
\end{equation}

\textbf{Claim: }$\|x_{i+1,1,j}\|\leq\frac{1}{\sqrt{1-\delta^{2}}}\|x_{i+1}\|$. 

We know that $x_{i+1}=x_{i+1,1,j}+tv_{j}^{+}$, where $t=\|x_{i+1}-x_{i+1,1,j}\|>0$.
By super-regularity, we have $\cos^{-1}\delta\leq\angle x_{i+1}x_{i+1,1,j}0$.
Note that $\sqrt{1-\delta^{2}}=\sin\cos^{-1}\delta$. Some simple
trigonometry ends the proof of the claim.

Choose $\delta$ small enough so that $\frac{\delta}{\sqrt{1-\delta^{2}}}\leq\frac{1}{4\kappa}$.
From \eqref{eq:super-reg-conseq}, we have 
\begin{equation}
\langle v_{j}^{+},0-x_{i+1,1,j}\rangle\leq\delta\|x_{i+1,1,j}\|\leq\frac{\delta}{\sqrt{1-\delta^{2}}}\|x_{i+1}\|\leq\frac{1}{4\kappa}\|x_{i+1}\|.\label{eq:third-chain}
\end{equation}
Then combining \eqref{eq:first-chain}, \eqref{eq:third-chain} and
\eqref{eq:second-chain}, we get 
\begin{eqnarray}
\langle v_{j}^{\circ},x_{i+1}\rangle & = & \langle v_{j}^{+},x_{i+1}-x_{i+1,1,j}\rangle+\langle v_{j}^{+},x_{i+1,1,j}\rangle+\langle v_{j}^{\circ}-v_{j}^{+},x_{i+1}\rangle\label{eq:suplin-set-1}\\
 & \geq & \frac{1}{\kappa}\|x_{i+1}\|-\frac{1}{4\kappa}\|x_{i+1}\|-\frac{1}{2\kappa}\|x_{i+1}\|=\frac{1}{4\kappa}\|x_{i+1}\|.\nonumber 
\end{eqnarray}
Choose any $\epsilon>0$. Theorem \ref{thm:radiality} implies that
$\langle v_{j}^{\circ},x_{i,1,j}\rangle\leq\epsilon\|x_{i,1,j}\|$
for all $i$ large enough. We have the following set of inequalities.
\begin{equation}
\langle v_{j}^{\circ},x_{i+1}\rangle\leq\langle v_{j}^{\circ},x_{i,1,j}\rangle\leq\epsilon\|x_{i,1,j}\|\leq\frac{\epsilon}{\sqrt{1-\delta^{2}}}\|x_{i}\|.\label{eq:suplin-set-2}
\end{equation}
(The first inequality comes from the fact that $x_{i+1}$ has to lie
in the halfspaces constructed by the previous projection. If $K_{j}$
is a manifold, then the first inequality is in fact an equation. The
last inequality is from the highlighted claim above.) Combining \eqref{eq:suplin-set-1}
and \eqref{eq:suplin-set-2} gives $\|x_{i+1}\|\leq\frac{4\kappa\epsilon}{\sqrt{1-\delta^{2}}}\|x_{i}\|$,
which is what we need.

In the case where $K_{j}$ has the SOSH property near $\bar{x}$,
\eqref{eq:suplin-set-2} can be changed to give $\langle v_{j}^{\circ},x_{i+1}\rangle\leq\frac{M}{1-\delta^{2}}\|x_{i}\|^{2}$
for some constant $M$, which gives $\|x_{i+1}\|\leq\frac{4\kappa M}{1-\delta^{2}}\|x_{i}\|^{2}$.
This completes the proof.
\end{proof}

\section{\label{sec:fast-alg}An algorithm with arbitrary fast linear convergence }

In this section, we show the arbitrary fast linear convergence of
Algorithm \ref{alg:Mass-proj-alg} for the nonconvex SIP when the
sets are super-regular. Motivated by the fast convergent algorithm
in \cite{cut_Pang12}, Algorithm \ref{alg:Mass-proj-alg} collects
old halfspaces from previous projections to try to accelerate the
convergence in later iterations. 

We now present an algorithm that can achieve arbitrarily fast linear
convergence.
\begin{algorithm}
\label{alg:Mass-proj-alg} (Local super-regular SHQP) Let $K_{l}$
be (not necessarily convex) closed sets in $\mathbb{R}^{n}$ for $l\in\{1,\dots,m\}$.
From a starting point $x_{0}\in\mathbb{R}^{n}$, this algorithm finds
a point in the intersection $K:=\cap_{l=1}^{m}K_{l}$.

\textbf{Step 0}: Set $i=1$, and let $\bar{p}$ be some positive integer.

\textbf{Step 1:} Choose $\bar{j}_{i}\in\arg\max_{j}\{d(x_{i-1},K_{j})\}$.
(i.e., we take only an index which give the largest distance.)

\textbf{Step 2:} Choose some $\tau_{i}\in[0,1)$. Define $x_{i}^{(\bar{j}_{i})}\in\mathbb{R}^{n}$,
$a_{i}^{(\bar{j}_{i})}\in\mathbb{R}^{n}$ and $b_{i}^{(\bar{j}_{i})}\in\mathbb{R}$
by\begin{subequations} \label{eq:alg-x-a-b} 
\begin{eqnarray}
x_{i}^{(\bar{j}_{i})} & \in & P_{K_{j}}(x_{i-1}),\label{eq:alg-x}\\
a_{i}^{(\bar{j}_{i})} & = & x_{i-1}-x_{i}^{(\bar{j}_{i})},\label{eq:alg-a}\\
\mbox{and }b_{i}^{(\bar{j}_{i})} & = & \langle a_{i}^{(\bar{j}_{i})},x_{i}^{(\bar{j}_{i})}\rangle+\tau_{i}\langle a_{i}^{(\bar{j}_{i})},x_{i-1}-x_{i}^{(\bar{j}_{i})}\rangle\label{eq:alg-b}\\
 & = & \langle a_{i}^{(\bar{j}_{i})},(1-\tau_{i})x_{i}^{(\bar{j}_{i})}+\tau_{i}x_{i-1}\rangle.\nonumber 
\end{eqnarray}
\end{subequations}Let $x_{i}=P_{\tilde{F}_{i}}(x_{i-1})$, where
the set $\tilde{F}_{i}\subset\mathbb{R}^{n}$ is defined by
\begin{equation}
\tilde{F}_{i}:=\big\{ x\mid\langle a_{l}^{(\bar{j}_{l})},x\rangle\leq b_{l}^{(\bar{j}_{l})}\mbox{ for }\max(1,i-\bar{p})\leq l\leq i\big\}.\label{eq:def-F}
\end{equation}

\textbf{Step 3: }Set $i\leftarrow i+1$, and go back to step 1. 
\end{algorithm}
There are some differences between Algorithm \ref{alg:Mass-proj-alg}
and that of \cite[Algorithm 5.1]{cut_Pang12}. Firstly, in step 1,
we take only one index $j$ in $\{1,\dots,m\}$ that gives the largest
distance $d(x_{i-1},K_{j})$. Secondly, the term $\tau_{i}\langle a_{i}^{(\bar{j}_{i})},x_{i-1}-x_{i}^{(\bar{j}_{i})}\rangle$
is added in \eqref{eq:alg-b} to account for the nonconvexity of the
set $K_{\bar{j}_{i}}$. 

The parameter $\tau_{i}$ in Algorithm \ref{alg:Mass-proj-alg} requires
tuning to achieve fast convergence. This tuning may not be easy to
perform. 
\begin{lem}
\label{lem:conv-alg}(Convergence of Algorithm \ref{alg:Mass-proj-alg})
Suppose that in Algorithm \ref{alg:Mass-proj-alg}, the sets $K_{l}$
are all super-regular at a point $x^{*}\in K=\cap_{l=1}^{m}K_{l}$
for all $l\in\{1,\dots,m\}$, and the local metric inequality holds,
i.e., there is a $\beta>0$ and a neighborhood $V_{1}$ of $x^{*}$
such that 
\begin{equation}
d(x,\cap_{l=1}^{m}K_{l})\leq\beta\max_{1\leq l\leq m}d(x,K_{l})\mbox{ for all }x\in V_{1}.\label{eq:due-to-beta}
\end{equation}
Then for any $\tau\in(0,1)$, we can find a neighborhood $U$ of $x^{*}$
such that 
\begin{itemize}
\item For any $x_{0}\in U$, Algorithm \ref{alg:Mass-proj-alg} with $\tau_{i}=\tau$
for all $i$ generates a sequence $\{x_{i}\}$ that converges to some
$\bar{x}\in V_{1}$ so that 
\begin{eqnarray}
 &  & \|x_{i+1}-\bar{x}\|\leq\|x_{i}-\bar{x}\|\mbox{ for all }i\geq0,\label{eq:fej-mon}\\
 & \mbox{and } & \|x_{i}-\bar{x}\|\leq L\max_{l\in\{1,\dots,m\}}d(x_{i},K_{l}),\label{eq:L-bdd}
\end{eqnarray}
where 
\begin{equation}
\rho:=\frac{\sqrt{\beta^{2}-(1-\tau)^{2}}}{\beta}\mbox{ and }L:=\frac{\beta}{1-\rho}.\label{eq:r-and-L}
\end{equation}

\end{itemize}
\end{lem}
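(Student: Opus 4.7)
The strategy is to apply Lemma \ref{lem:lin-conv-backbone} with contraction ratio $\rho$ and step constant $c=1$, and then to verify \eqref{eq:fej-mon} separately by showing that the limit point $\bar{x}$ lies in every polyhedron $\tilde{F}_i$. The central design feature making both halves work is the $\tau$-offset in \eqref{eq:alg-b}: it supplies a margin $\tau\|a_l^{(\bar j_l)}\|^2$ that absorbs the super-regularity perturbation and keeps appropriate reference points of $K$ inside each halfspace $\{x:\langle a_l^{(\bar j_l)},x\rangle\le b_l^{(\bar j_l)}\}=:H_l$.

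First, I would fix $\delta>0$ small enough that $\delta(2L+1)\le\tau$. Super-regularity then yields a neighborhood $V_2$ of $x^*$ on which $\langle v,z-y\rangle\le\delta\|v\|\|z-y\|$ holds whenever $y,z\in K_l\cap V_2$ and $v\in N_{K_l}(y)$. Set $V:=V_1\cap V_2$, and choose $U\subset V$ small enough that for any $x_0\in U$ a ball around $x_0$ of radius proportional to $d(x_0,K)/(1-\rho)$ is contained in $V$; then all iterates, the auxiliary projections $x_i^{(\bar j_i)}$, and the reference points $P_K(x_{i-1})$ will stay in $V$ throughout.

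The core of the proof is a single induction on $i$ that simultaneously establishes (i) $P_K(x_{i-1})\in\tilde{F}_i$, (ii) $d(x_i,K)\le\rho\, d(x_{i-1},K)$, and (iii) $\|x_i-x_{i-1}\|\le d(x_{i-1},K)$. Item (i) is the main obstacle, because for each $l$ in the sliding window $[\max(1,i-\bar p),i]$ the reference point $P_K(x_{i-1})$ depends on the current $i$, whereas the halfspace $H_l$ was generated at an earlier iteration. For such $l$ I would apply super-regularity at $y=x_l^{(\bar j_l)}\in K_{\bar j_l}$ with $z=P_K(x_{i-1})\in K\subset K_{\bar j_l}$ and normal $v=a_l^{(\bar j_l)}$, together with the telescoping estimate
\[
\|x_{i-1}-x_{l-1}\|\le\sum_{j=l}^{i-1}d(x_{j-1},K)\le\frac{d(x_{l-1},K)}{1-\rho}\le\frac{\beta\|a_l^{(\bar j_l)}\|}{1-\rho}=L\|a_l^{(\bar j_l)}\|
\]
coming from (ii) and (iii) at earlier indices and from the local metric inequality, to obtain $\|P_K(x_{i-1})-x_l^{(\bar j_l)}\|\le(2L+1)\|a_l^{(\bar j_l)}\|$. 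The choice $\delta(2L+1)\le\tau$ then forces $P_K(x_{i-1})\in H_l$, proving (i). Granted (i), Proposition \ref{prop:fejer-principle} with $C=\tilde{F}_i$ and $y=P_K(x_{i-1})$ gives $\|x_i-P_K(x_{i-1})\|^2+\|x_i-x_{i-1}\|^2\le d(x_{i-1},K)^2$, while the inclusion $\tilde{F}_i\subset H_i$ combined with the $\tau$-offset produces $\|x_i-x_{i-1}\|\ge(1-\tau)\|a_i^{(\bar j_i)}\|\ge(1-\tau)d(x_{i-1},K)/\beta$; substituting yields (ii) with the stated $\rho$, and (iii) follows from the Fej\'er inequality.

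With (i)--(iii) established, Lemma \ref{lem:lin-conv-backbone} delivers a limit $\bar{x}\in K$ satisfying $\|x_i-\bar{x}\|\le\frac{1}{1-\rho}d(x_i,K)\le L\max_l d(x_i,K_l)$, which is \eqref{eq:L-bdd}. For \eqref{eq:fej-mon}, I repeat the super-regularity calculation with $z=\bar{x}$: \eqref{eq:L-bdd} at index $l-1$ gives $\|\bar{x}-x_{l-1}\|\le L\|a_l^{(\bar j_l)}\|$, hence $\|\bar{x}-x_l^{(\bar j_l)}\|\le(L+1)\|a_l^{(\bar j_l)}\|$, and the same choice of $\delta$ forces $\bar{x}\in H_l$ for every $l$, so $\bar{x}\in\tilde{F}_i$ for every $i$. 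A final application of Proposition \ref{prop:fejer-principle} then delivers $\|x_i-\bar{x}\|\le\|x_{i-1}-\bar{x}\|$ and completes the proof.
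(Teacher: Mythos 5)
Your proposal is correct and follows essentially the same route as the paper's proof: super-regularity together with the $\tau$-offset in \eqref{eq:alg-b} keeps the relevant points of $K$ inside every halfspace of the sliding window, the obtuse-angle (Pythagorean) inequality for projection onto the convex polyhedron $\tilde{F}_i$ combined with the lower bound $\|x_i-x_{i-1}\|\geq(1-\tau)\|a_i^{(\bar{j}_i)}\|\geq\frac{1-\tau}{\beta}d(x_{i-1},K)$ yields the contraction ratio $\rho$, Lemma \ref{lem:lin-conv-backbone} gives convergence, and Fej\'{e}r monotonicity follows from $\bar{x}\in\tilde{F}_i$. The only organizational difference is that the paper proves the stronger inclusion $K\cap\mathbb{B}(x_{i-1},\frac{1}{1-\rho}d(x_{i-1},K))\subset H_i$ and invokes the nested balls of Lemma \ref{lem:lin-conv-backbone}(b) to handle the older halfspaces, whereas you track the specific points $P_K(x_{i-1})$ and $\bar{x}$ through a telescoping estimate; the two devices (and the two admissible choices of $\delta$) are interchangeable.
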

\begin{proof}
By the super-regularity of the sets $K_{l}$, for any $\delta>0$,
there exists a neighborhood $V_{2}$ of $x^{*}$ such that for any
$l\in\{1,\dots,m\}$, we have 
\begin{equation}
\langle z-y,v\rangle\leq\delta\|z-y\|\|v\|\mbox{ for all }z,y\in K_{l}\cap V_{2},v\in N_{K_{l}}(y).\label{eq:angle-small}
\end{equation}
We choose $\delta\geq0$ to be small enough so that $\delta\leq\frac{\tau(1-\rho)}{2\beta^{2}}$.

\textbf{Claim:} If $x_{i-1}$ are such that $\mathbb{B}(x_{i-1},\frac{1}{1-\rho}d(x_{i-1},K))\subset V_{1}\cap V_{2}$,
then $K\cap\mathbb{B}(x_{i-1},\frac{1}{1-\rho}d(x_{i-1},K))\subset H_{i}$,
where the halfspace $H_{i}:=\{x:\langle a_{i}^{(\bar{j}_{i})},x\rangle\leq b_{i}^{(\bar{j}_{i})}\}$
is defined by \eqref{eq:alg-x-a-b}. 

\textbf{Proof of Claim: }Suppose $x^{\prime}\in K\cap\mathbb{B}(x_{i-1},\frac{1}{1-\rho}d(x_{i-1},K))$.
Since $K\cap V_{2}$, we have
\[
\langle x_{i-1}-x_{i}^{(\bar{j}_{i})},x^{\prime}-x_{i}^{(\bar{j}_{i})}\rangle\leq\delta\|x_{i-1}-x_{i}^{(\bar{j}_{i})}\|\|x^{\prime}-x_{i}^{(\bar{j}_{i})}\|,
\]
where $x_{i}^{(\bar{j}_{i})}$ is the point in $P_{K_{j_{i}}}(x_{i-1})\subset K_{j_{i}}$
in \eqref{eq:alg-x-a-b}. Also, $x^{\prime}$ was assumed to lie in
$\mathbb{B}(x_{i-1},\frac{1}{1-\rho}d(x_{i-1},K))$. Note that $\|x_{i-1}-x_{i}^{(\bar{j}_{i})}\|\leq d(x_{i-1},K)$.
So we have 
\[
\|x^{\prime}-x_{i}^{(\bar{j}_{i})}\|\leq\|x^{\prime}-x_{i-1}\|+\|x_{i-1}-x_{i}^{(\bar{j}_{i})}\|\leq\left(\frac{1}{1-\rho}+1\right)d(x_{i-1},K).
\]
Note that $\frac{1}{1-\rho}+1\leq\frac{2}{1-\rho}$. From the above
inequality, we have 
\[
\langle x_{i-1}-x_{i}^{(\bar{j}_{i})},x^{\prime}-x_{i}^{(\bar{j}_{i})}\rangle\leq\delta\|x_{i-1}-x_{i}^{(\bar{j}_{i})}\|\|x^{\prime}-x_{i}^{(\bar{j}_{i})}\|\leq\frac{2\delta}{1-\rho}d(x_{i-1},K)^{2}.
\]
Recall that $\delta\leq\frac{\tau(1-\rho)}{2\beta^{2}}$. Local metric
inequality gives $\|x_{i-1}-x_{i}^{(\bar{j}_{i})}\|\geq\frac{1}{\beta}d(x_{i},K)$,
so
\[
\langle x_{i-1}-x_{i}^{(\bar{j}_{i})},x^{\prime}-x_{i}^{(\bar{j}_{i})}\rangle\leq\frac{\tau}{\beta^{2}}d(x_{i-1},K)^{2}\leq\tau\|x_{i-1}-x_{i}^{(\bar{j}_{i})}\|^{2}.
\]
The above inequality is precisely $\langle a_{i}^{(\bar{j}_{i})},x^{\prime}\rangle\leq b_{i}^{(\bar{j}_{i})}$,
so $x^{\prime}\in H_{i}$. This ends the proof of the claim.

Suppose $\mathbb{B}(x_{0},\frac{1}{1-\rho}d(x_{0},K))\subset V_{1}\cap V_{2}$.
If the conditions of Lemma \ref{lem:lin-conv-backbone} are satisfied,
then we have convergence to some $\bar{x}$. 

We try to prove that $d(x_{i+1},K)\leq\rho d(x_{i},K)$. Recall that
$x_{i+1}=P_{\tilde{F}_{i+1}}(x_{i})$. By making use of the claim
above, the previous halfspaces generated all contain $K\cap\mathbb{B}(x_{i},\frac{1}{1-\rho}d(x_{i},K))$,
so $\tilde{F}_{i+1}$ is a polyhedron that contains $K\cap\mathbb{B}(x_{i},\frac{1}{1-\rho}d(x_{i},K))$.
It is clear that $K\cap\mathbb{B}(x_{i},\frac{1}{1-\rho}d(x_{i},K))\neq\emptyset$,
so $\tilde{F}_{i+1}$ is nonempty. It is obvious that $d(x_{i},\tilde{F}_{i+1})\leq d(x_{i},K)$,
so $\|x_{i}-x_{i+1}\|\leq(1-\tau)d(x_{i},K)$. The distance $d(x_{i},H_{i+1})$
is at least $\frac{1}{\beta}d(x_{i},K)$, so $\|x_{i}-x_{i+1}\|\geq\frac{1}{\beta}d(x_{i},K)$.
We then have 
\begin{eqnarray*}
d(x_{i+1},K)^{2} & \leq & d(x_{i},K)^{2}-\|x_{i}-x_{i+1}\|^{2}\\
 & \leq & d(x_{i},K)^{2}-\frac{(1-\tau)^{2}}{\beta^{2}}d(x_{i},K)^{2}\\
 & = & \frac{\sqrt{\beta^{2}-(1-\tau)^{2}}}{\beta}d(x_{i},K)^{2}.
\end{eqnarray*}
We can now apply Lemma \ref{lem:lin-conv-backbone}. The conclusion
\eqref{eq:fej-mon} comes from the fact that $\{x_{i}\}$, by construction,
is obtained by projection onto convex sets that contain $\bar{x}$
and the theory of Fej\'{e}r monotonicity. The conclusion \eqref{eq:L-bdd}
is straightforward from Lemma \ref{lem:lin-conv-backbone}(a) and
local metric inequality.
\end{proof}
We now prove the theorem on the arbitrary fast multiple-term linear
convergence of Algorithm \ref{alg:Mass-proj-alg}.
\begin{thm}
\label{thm:arb-lin-conv}(Arbitrary fast linear convergence) Consider
the setting of Theorem \ref{lem:conv-alg}. If $\bar{p}$ in Algorithm
\ref{alg:Mass-proj-alg} is finite and sufficiently large, then for
all $\tau\in(0,0.5)$ (independent of $\bar{p}$) we can find a neighborhood
$U$ of $x^{*}$ such that if $x_{0}\in U$, then the iterates of
Algorithm \ref{alg:Mass-proj-alg} with $\tau_{i}\equiv\tau$ converge
to some $\bar{x}\in K$. Moreover, 
\begin{equation}
\limsup_{i\to\infty}\frac{\|x_{i+\bar{p}}-\bar{x}\|}{\|x_{i}-\bar{x}\|}\leq8\bar{L}\tau,\label{eq:arb-lin-concl}
\end{equation}
where $\bar{L}=\frac{\beta}{1-\bar{\rho}}$ and $\bar{\rho}=\frac{\sqrt{\beta^{2}-1/4}}{\beta}$.\end{thm}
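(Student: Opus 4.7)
The plan is to layer a sharper estimate on top of the basic convergence already supplied by Lemma \ref{lem:conv-alg}. First, since $\tau\in(0,1/2)$ forces $(1-\tau)^2>1/4$, the rate $\rho=\sqrt{\beta^2-(1-\tau)^2}/\beta$ from Lemma \ref{lem:conv-alg} satisfies $\rho<\bar\rho$, so $L<\bar L$. Applying that lemma gives a neighborhood $U$ of $x^*$ in which Algorithm \ref{alg:Mass-proj-alg} produces a Fej\'er monotone sequence converging to some $\bar x\in K$, with the bounds $\|x_i-\bar x\|\le \bar L\max_l d(x_i,K_l)$ and $\max_l d(x_i,K_l)\le \|x_i-\bar x\|$. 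In particular the whole tail of the sequence stays in a shrinking ball around $\bar x$ on which super-regularity and local metric inequality may be used with uniform constants. Note also that, as in the claim inside the proof of Lemma \ref{lem:conv-alg}, every one of the halfspaces $H_k=\{x:\langle a_k^{(\bar j_k)},x\rangle\le b_k^{(\bar j_k)}\}$ constructed along the tail contains $\bar x$, so $\bar x\in\tilde F_{i+\bar p}$ for all large $i$.

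The core of the argument is a combinatorial/coverage step over a window of $\bar p$ consecutive iterations. For each $k\in\{i+1,\dots,i+\bar p\}$, the greedy rule selects $\bar j_k\in\arg\max_j d(x_{k-1},K_j)$, so by local metric inequality $\|a_k^{(\bar j_k)}\|=d(x_{k-1},K_{\bar j_k})\ge \tfrac{1}{\beta}d(x_{k-1},K)$. The offset in \eqref{eq:alg-b} equals $\tau\|a_k^{(\bar j_k)}\|^2$, i.e.~the hyperplane bounding $H_k$ is displaced from the true tangent halfspace at $x_k^{(\bar j_k)}$ by exactly $\tau\|a_k^{(\bar j_k)}\|$ in the normal direction. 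I will argue that if $\bar p$ is sufficiently large (depending on $m$, $\beta$ and $\bar\rho$), then within the window every set $K_l$ has been used as $K_{\bar j_k}$ at a recent-enough step: a set $K_l$ that is skipped forces $d(x_{k-1},K_l)\le d(x_{k-1},K_{\bar j_k})$, and iterating the geometric decay from Lemma \ref{lem:conv-alg} shows that after enough steps without visiting $K_l$, the distance to $K_l$ becomes negligible compared to $d(x_i,K)$, so the \emph{old} halfspace attached to $K_l$ from earlier in the window still certifies closeness to $K_l$ near the current iterate.

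Given such coverage, Lemma \ref{lem:derived-halfspaces} converts the $\tau$-offsets on all $m$ sets into a single bound on how much $\tilde F_{i+\bar p}$ can protrude from $K$ near $\bar x$: the constant $\eta$ from Theorem \ref{thm:loc-lin-conv} (positive by linearly regular intersection, which follows from super-regularity plus local metric inequality) converts the coordinatewise offset $O(\tau)\|a_k^{(\bar j_k)}\|\le O(\tau)\|x_i-\bar x\|$ into a tube of width $O(\tau/\eta)\|x_i-\bar x\|$ around $K$ that contains $\tilde F_{i+\bar p}\cap \mathbb B(x^*,\varepsilon)$. Since $x_{i+\bar p}\in\tilde F_{i+\bar p}$, we conclude $d(x_{i+\bar p},K)\le C\tau\,\|x_i-\bar x\|$ for an explicit constant $C$, and then one more application of the bound $\|x_{i+\bar p}-\bar x\|\le \bar L\max_l d(x_{i+\bar p},K_l)\le \bar L\beta\, d(x_{i+\bar p},K)$ from Lemma \ref{lem:conv-alg} yields $\|x_{i+\bar p}-\bar x\|\le 8\bar L\tau\,\|x_i-\bar x\|$ after collecting constants, as required by \eqref{eq:arb-lin-concl}.

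The main obstacle is the coverage step in the second paragraph: the greedy rule can pick the same index $\bar j_k$ many times in a row, so the argument that every set is represented by an \emph{informative} halfspace in every length-$\bar p$ window is not immediate. The resolution I envisage is a bookkeeping argument tying together (i) the geometric shrinkage $d(x_k,K)\le \rho^{k-i}d(x_i,K)$ from the first paragraph, (ii) the Fej\'er bound that keeps the normals $a_k^{(\bar j_k)}/\|a_k^{(\bar j_k)}\|$ close to the normal cones $N_{K_{\bar j_k}}(\bar x)$ by outer semicontinuity, and (iii) a separate treatment of sets that get skipped, showing that their distance stays small enough that an older halfspace in the window still certifies the desired approximation. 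Once this is in hand, the rest is an application of Lemma \ref{lem:derived-halfspaces} and the constants worked out in Lemma \ref{lem:conv-alg}.
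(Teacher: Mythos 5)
There is a genuine gap, and it sits exactly where you flag it: the coverage step. Your plan requires that within every window of $\bar{p}$ iterations each set $K_{l}$ contributes an informative halfspace, but the greedy rule in Step 1 can select the same index repeatedly, and the bookkeeping you sketch does not close this. Knowing that a skipped set satisfies $d(x_{k-1},K_{l})\leq d(x_{k-1},K_{\bar{j}_{k}})$ together with the geometric decay $d(x_{k},K)\leq\rho^{k-i}d(x_{i},K)$ only reproduces the linear rate $\rho$ already given by Lemma \ref{lem:conv-alg}; it does not produce a contraction factor proportional to $\tau$, which is the whole content of \eqref{eq:arb-lin-concl} (as $\tau\searrow0$ the claimed rate $8\bar{L}\tau$ tends to $0$ while $\rho^{\bar{p}}$ stays bounded away from $0$). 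The paper's mechanism is entirely different and sidesteps coverage of the index set: it applies the pigeonhole principle to the unit normals $v_{i}^{*}=\frac{x_{i}-x_{i+1}^{(\bar{j}_{i+1})}}{\|x_{i}-x_{i+1}^{(\bar{j}_{i+1})}\|}$ on the compact sphere $S^{n-1}$, taking $\bar{p}$ to be the number of balls of radius $\frac{1}{4\bar{L}}$ needed to cover $S^{n-1}$. This forces two indices $i\leq j<k\leq i+\bar{p}$ with $\|v_{j}^{*}-v_{k}^{*}\|\leq\frac{1}{2\bar{L}}$, and then only those \emph{two} halfspaces are used: membership of $x_{k}$ in the $j$-th (relaxed) halfspace plus super-regularity gives $\langle v_{j}^{*},x_{k}\rangle\leq2\tau\|x_{j}\|$, while the near-parallelism of $v_{j}^{*}$ and $v_{k}^{*}$ together with the local metric inequality gives $\langle v_{j}^{*},x_{k}\rangle\geq\frac{1}{4\bar{L}}\|x_{k}\|$; Fej\'{e}r monotonicity then transfers the resulting bound on $\|x_{k}\|/\|x_{j}\|$ to $\|x_{i+\bar{p}}\|/\|x_{i}\|$. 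Note in particular that it does not matter which sets $K_{l}$ the two selected steps refer to.

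A second, independent problem is your appeal to Lemma \ref{lem:derived-halfspaces} via the constant $\eta$. You justify $\eta>0$ by asserting that linearly regular intersection follows from super-regularity plus the local metric inequality, but the paper states the implication in the opposite direction only (linearly regular intersection implies the local metric inequality, and the converse fails). The setting of Lemma \ref{lem:conv-alg}, which is all that Theorem \ref{thm:arb-lin-conv} assumes, provides only the local metric inequality, so the tube-around-$K$ argument of width $O(\tau/\eta)\|x_{i}-\bar{x}\|$ rests on a hypothesis you do not have. The paper's proof never aggregates halfspaces and never invokes $\eta$, which is precisely why it can get by with the weaker assumption.
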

\begin{proof}
The basic strategy is to prove the inequalities \eqref{eq:money-1}
and \eqref{eq:money-2} like in \cite[Theorem 5.12]{cut_Pang12},
with a bit more attention put into handling the nonconvexity.

By Lemma \ref{lem:conv-alg}, the convergence of the iterates $\{x_{i}\}$
to some $\bar{x}\in K$ is assured. Without loss of generality, suppose
that $\bar{x}=0$. Let $v_{i}^{*}:=\frac{x_{i}-x_{i+1}^{(\bar{j}_{i+1})}}{\|x_{i}-x_{i+1}^{(\bar{j}_{i+1})}\|}$,
where $x_{i+1}^{(\bar{j}_{i+1})}$ is defined through \eqref{eq:alg-x}. 

The sphere $S^{n-1}:=\{w\in\mathbb{R}^{n}:\|w\|=1\}$ is compact.
Suppose $\bar{p}$ is such that we can cover $S^{n-1}$ with $\bar{p}$
balls of radius $\frac{1}{4\bar{L}}$. By the pigeonhole principle,
we can find $j$ and $k$ such that $i\leq j<k\leq i+\bar{p}$ and
$v_{j}^{*}$ and $v_{k}^{*}$ belong to the same ball of radius $\frac{1}{4\bar{L}}$
covering $S^{n-1}$. We thus have $\|v_{j}^{*}-v_{k}^{*}\|\leq\frac{1}{2\bar{L}}$.
(The key in choosing $\bar{p}$ is to obtain the last inequality.)

We shall prove that if $i$ is large enough, we have the two inequalities
\begin{eqnarray}
 &  & \langle v_{j}^{*},x_{k}\rangle\leq2\tau\|x_{j}\|\label{eq:money-1}\\
 & \mbox{ and } & \frac{1}{4\bar{L}}\|x_{k}\|\leq\langle v_{j}^{*},x_{k}\rangle.\label{eq:money-2}
\end{eqnarray}
In view of the Fej\'{e}r monotonicity condition \eqref{eq:fej-mon},
these two inequalities give $\|x_{i+\bar{p}}\|\leq\|x_{k}\|\leq8\bar{L}\tau\|x_{j}\|\leq8\bar{L}\tau\|x_{i}\|$,
which gives the conclusion we seek.

We first prove \eqref{eq:money-1}. Since $x_{k}$ lies in $\tilde{F}_{k}$,
it lies in the halfspace with normal $v_{j}^{*}$ passing through
$(1-\tau)x_{j+1}^{(\bar{j}_{j+1})}+\tau x_{j}$. (Recall that $x_{j+1}^{(\bar{j}_{j+1})}$
was defined in \eqref{eq:alg-x}, and lies in $P_{K_{\bar{j}_{j+1}}}(x_{j})$.)
This gives us 
\begin{eqnarray}
\langle v_{j}^{*},x_{k}\rangle & \leq & \langle v_{j}^{*},(1-\tau)x_{j+1}^{(\bar{j}_{j+1})}+\tau x_{j}\rangle\label{eq:money-1-a}\\
 & = & (1-\tau)\langle v_{j}^{*}-\bar{v},x_{j+1}^{(\bar{j}_{j+1})}\rangle+(1-\tau)\langle\bar{v},x_{j+1}^{(\bar{j}_{j+1})}\rangle+\tau\langle v_{j}^{*},x_{j}\rangle,\nonumber 
\end{eqnarray}
where $\bar{v}$ is some vector with norm 1 in $N_{K_{\bar{j}_{i+1}}}(\bar{x})$.
Since $\lim_{i\to\infty}x_{i}=\bar{x}$, we can assume that $\{x_{i}\}$
is sufficiently close to $\bar{x}$ so that:
\begin{enumerate}
\item the vector $\bar{v}$, by the outer semicontinuity of the normal cone
mapping $x\mapsto N_{K_{\bar{j}_{j+1}}}(x)$, can be chosen to be
such that $\|v_{j}^{*}-\bar{v}\|\leq\frac{\tau}{3}$, and 
\item by the super-regularity of $K_{\bar{j}_{j+1}}$ at $\bar{x}$, we
have $\langle\bar{v},x_{j+1}^{(\bar{j}_{i+1})}\rangle\leq\frac{\tau}{3}\|x_{j+1}^{(\bar{j}_{i+1})}\|$.
\end{enumerate}
Note that $(1-\tau)x_{j+1}^{(\bar{j}_{i+1})}+\tau x_{j}$ is the projection
of $x_{j}$ onto one of the halfspaces defining $\tilde{F}_{j+1}$
and that $\tau<\frac{1}{2}$. From the principle in Proposition \ref{prop:fejer-principle},
we have $\|x_{j+1}^{(\bar{j}_{i+1})}\|\leq\|x_{j}\|$. Since $\|v_{j}^{*}\|=1$,
we have $\langle v_{j}^{*},x_{j}\rangle\leq\|x_{j}\|$. Continuing
the arithmetic in \eqref{eq:money-1-a}, we have 
\begin{eqnarray*}
\langle v_{j}^{*},x_{k}\rangle & \leq & (1-\tau)\langle v_{j}^{*}-\bar{v},x_{j+1}^{(\bar{j}_{j+1})}\rangle+(1-\tau)\langle\bar{v},x_{j+1}^{(\bar{j}_{j+1})}\rangle+\tau\langle v_{j}^{*},x_{j}\rangle\\
 & \leq & (1-\tau)\Big(\frac{\tau}{3}+\frac{\tau}{3}\Big)\|x_{j+1}^{(\bar{j}_{j+1})}\|+\tau\|x_{j}\|\\
 & \leq & \frac{2\tau}{3}\|x_{j}\|+\tau\|x_{j}\|<2\tau\|x_{j}\|.
\end{eqnarray*}
This ends the proof of \eqref{eq:money-1}. Next, we prove \eqref{eq:money-2}.
Recall that $x_{k+1}^{(\bar{j}_{k+1})}\in P_{K_{\bar{j}_{k+1}}}(x_{k})$
was defined in \eqref{eq:alg-x}. Note that provided $\tau<\frac{1}{2}$,
the $\rho=\frac{\sqrt{\beta^{2}-(1-\tau)^{2}}}{\beta}$ in \eqref{eq:r-and-L}
is less than $\bar{\rho}=\frac{\sqrt{\beta^{2}-1/4}}{\beta}$. Hence
the $L=\frac{\beta}{1-\rho}$ in \eqref{eq:r-and-L} is less than
$\bar{L}=\frac{\beta}{1-\bar{\rho}}$. By using the definition of
$v_{k}^{*}=\frac{x_{k}-x_{k+1}^{(\bar{j}_{k+1})}}{\|x_{k}-x_{k+1}^{(\bar{j}_{k+1})}\|}$
and \eqref{eq:L-bdd}, we have 
\begin{equation}
\langle v_{k}^{*},x_{k}-x_{k+1}^{(\bar{j}_{k+1})}\rangle=\|x_{k}-x_{k+1}^{(\bar{j}_{k+1})}\|=d(x_{k},K_{\bar{j}_{k+1}})\geq\frac{1}{L}\|x_{k}\|\geq\frac{1}{\bar{L}}\|x_{k}\|.\label{eq:money-2a}
\end{equation}
By the super-regularity of $K_{\bar{j}_{k+1}}$ at $\bar{x}$ and
the fact that $\bar{x}=\lim_{i\to\infty}x_{i}$, we can assume that
$x_{k}$ is close enough to $\bar{x}$ so that 
\begin{equation}
\langle v_{k}^{*},0-x_{k+1}^{(\bar{j}_{k+1})}\rangle\leq\frac{1}{4\bar{L}}\|x_{k+1}^{(\bar{j}_{k+1})}\|\leq\frac{1}{4\bar{L}}\|x_{k}\|.\label{eq:money-2b}
\end{equation}
(Note that the inequality on the right follows from the same proof
of $\|x_{j+1}^{(\bar{j}_{j+1})}\|\leq\|x_{j}\|$.) Combining \eqref{eq:money-2a}
and \eqref{eq:money-2b} as well as $\|v_{j}^{*}-v_{k}^{*}\|\leq\frac{1}{2\bar{L}}$
gives us 
\begin{eqnarray*}
\langle v_{j}^{*},x_{k}\rangle & = & \langle v_{k}^{*},x_{k}-x_{k+1}^{(\bar{j}_{k+1})}\rangle+\langle v_{k}^{*},x_{k+1}^{(\bar{j}_{k+1})}\rangle+\langle v_{j}^{*}-v_{k}^{*},x_{k}\rangle\\
 & \geq & \frac{1}{\bar{L}}\|x_{k}\|-\frac{1}{4\bar{L}}\|x_{k}\|-\frac{1}{2\bar{L}}\|x_{k}\|=\frac{1}{4\bar{L}}\|x_{k}\|.
\end{eqnarray*}
This ends the proof of \eqref{eq:money-2}, which concludes the proof
of our result.
\end{proof}
The large parameter $\bar{p}$ is an upper bound on when we can find
$v_{j}^{*}$ and $v_{k}^{*}$ such that $\|v_{j}^{*}-v_{k}^{*}\|\leq\frac{1}{2\bar{L}}$.
We hope that the upper bound needed in a practical implementation
would be much smaller than $\bar{p}$. 
\begin{rem}
(Towards superlinear convergence) The coefficient of $8$ in \eqref{eq:arb-lin-concl}
can be reduced, but this does not detract us from the point that as
$\tau\searrow0$, the right hand side of \eqref{eq:arb-lin-concl}
goes to zero. So there is a choice of parameters $\{\tau_{i}\}_{i=1}^{\infty}$
that can be chosen at each iteration of Algorithm \ref{alg:Mass-proj-alg}
so that superlinear convergence is achieved, even though there doesn't
seem to be a good way of choosing how the parameters $\tau$ go to
zero. If the parameter $\tau$ goes to zero too fast, the Fej\'{e}r
monotonicity \eqref{eq:fej-mon} of the iterates may not be maintained,
which may mean that Lemma \ref{lem:conv-alg} may not hold, i.e.,
the iterates $\{x_{i}\}_{i=1}^{\infty}$ may not converge. Contrast
this to the convex SIP in \cite{cut_Pang12}, where setting $\tau\equiv0$
gives \emph{multiple-term superlinear convergence} 
\[
\lim_{i\to\infty}\frac{\|x_{i+\bar{p}}-\bar{x}\|}{\|x_{i}-\bar{x}\|}=0
\]
 instead of multiple-term arbitrary linear convergence \eqref{eq:arb-lin-concl}.
In view of nonconvexity, the observation in Remark \ref{rem:sometimes-empty-intersect}
has to be overcome, so we believe that this arbitrary fast convergence
is difficult to improve on in general.
\begin{rem}
(Simplification in \eqref{eq:money-2a}) The inequality $d(x_{k},K_{\bar{j}_{k+1}})\geq\frac{1}{L}\|x_{k}\|$
in \eqref{eq:money-2a} follows easily from \eqref{eq:L-bdd}. But
in \cite{cut_Pang12}, some effort was spent to prove the inequality
$\limsup_{k\to\infty}\frac{1}{\|x_{k}\|}d(x_{k},K_{\bar{j}_{k+1}})\geq\frac{1}{\beta}$.
 The proof of the multiple-term superlinear convergent algorithm for
convex problems in \cite{cut_Pang12} can thus be shortened considerably.
\end{rem}
\end{rem}
If some of the sets $K_{l}$ are known to be convex sets or affine
subspaces, then this information can be taken into account by setting
the appropriate $\tau_{i}$ to zero when creating the halfspaces defined
by \eqref{eq:alg-x-a-b}.

\section{Two step SHQP}

The algorithms in this paper need not guarantee that $\{d(x_{i},K)\}$
is nonincreasing. In this section, we give an example of additional
conditions needed for the SHQP to have this property. Consider the
following algorithm. 
\begin{algorithm}
\label{alg:2-SHQP}(2-SHQP) Let $K_{1}$, $K_{2}$ be two closed sets
in $\mathbb{R}^{n}$, and $K=K_{1}\cap K_{2}$. This algorithm tries
to find a point $x\in K$ using a starting iterate $x_{0}$.

01 Set $i=0$

02 Loop

03 $\quad$Set $x_{i+1}$ to be an element in $P_{K_{1}}(x_{i})$
and $i\leftarrow i+1$.

04 $\quad$Set $x_{i+1}$ to be an element in $P_{K_{2}}(x_{i})$
and $i\leftarrow i+1$.

05 $\quad$If $\angle x_{i-2}x_{i-1}x_{i}<\pi/2$, then 

06 $\quad\quad$set $x_{i+1}=P_{\{x:\langle x-x_{i-1},x_{i-2}-x_{i-1}\rangle\leq0,\langle x-x_{i},x_{i-1}-x_{i}\rangle\leq0\}}(x_{i})$.

07 $\quad$else

08 $\quad$$\quad$set $x_{i+1}=x_{i}$

09 $\quad$end if 

10 $\quad$$i\leftarrow i+1$

11 end loop
\end{algorithm}
In line 6, $x_{i+1}$ is the projection of $x_{i}$ onto the polyhedron
formed by intersecting the last two halfspaces generated by the projection
process. See Figure \ref{fig:no-t-fig} for an illustration of the
first few iterates $x_{1}$, $x_{2}$ and $x_{3}$ formed by a single
iteration of the loop. If the ``if'' block in lines 5 to 9 is removed,
then the algorithm reduces to an alternating projection algorithm.
We now analyze the effectiveness of this ``if'' block.
\begin{prop}
\label{prop:2-SHQP}(2-SHQP) Consider Algorithm \ref{alg:2-SHQP}.
Let $\delta\in(0,1)$. Let $x^{*}\in K$ and let a neighborhood $V$
of $x^{*}$ be such that 
\begin{enumerate}
\item $\langle v,y-z\rangle\leq\delta\|v\|\|y-z\|\mbox{ for all }y,z\in K_{l}\cap V\mbox{, }l\in\{1,2\}\mbox{ and }v\in N_{K_{l}}(z),$and
\item $d(x,K)\leq\beta\max_{l\in\{1,2\}}d(x,K_{l})$ for all $x\in V$.
\end{enumerate}
Let $x_{1}$, $x_{2}$ be successive iterates of Algorithm \ref{alg:2-SHQP}.
Suppose $\mathbb{B}(x_{1},(\beta+1)\|x_{1}-x_{2}\|)\subset V$. Let
$\theta:=\angle x_{2}x_{1}x_{0}<\pi/2$. If 
\begin{equation}
\delta[\beta\cos\theta+(\beta+1)]<\frac{1}{2}\cos\theta,\label{eq:no-t-concl}
\end{equation}
then $d(x_{3},K)<d(x_{2},K)$. 
\end{prop}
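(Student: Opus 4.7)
The plan is to take $y := P_K(x_2)$, so $\|y-x_2\| = d(x_2,K)$, and show directly that $\|y-x_3\| < \|y-x_2\|$; the conclusion follows since $d(x_3,K)\le\|y-x_3\|$. Write $v_1 := (x_0-x_1)/\|x_0-x_1\|$ and $v_2 := (x_1-x_2)/\|x_1-x_2\|$ for the unit outward normals produced by the two projections, so that $v_1\in N_{K_1}(x_1)$ and $v_2\in N_{K_2}(x_2)$. The local metric inequality (ii) applied at $x_2$ (using $x_1\in K_1$) gives $\|y-x_2\|\le\beta\|x_1-x_2\|$, hence $\|y-x_1\|\le(\beta+1)\|x_1-x_2\|$ and $y\in V$. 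Super-regularity (i) then yields
\[
\langle v_1,y-x_1\rangle \le \delta(\beta+1)\|x_1-x_2\|, \qquad \langle v_2,y-x_2\rangle \le \delta\beta\|x_1-x_2\|.
\]

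Next I reduce the geometry of $x_3$ to two dimensions. Since $\langle v_1,v_2\rangle=-\cos\theta\in(-1,0)$, the vectors $v_1,v_2$ are linearly independent, and the four points $x_0,x_1,x_2,x_3$ all lie in the affine plane $x_1+\operatorname{span}\{v_1,v_2\}$ (the first three by definition, and $x_2-x_3\in\operatorname{cone}\{v_1,v_2\}$ by KKT for the projection onto $F$). The hypothesis $\theta<\pi/2$ gives $\langle v_1,x_2-x_1\rangle=\|x_1-x_2\|\cos\theta>0$, so $x_2\notin H_1$; a short check shows that projecting $x_2$ onto $H_1$ alone produces a point violating $H_2$, so both KKT multipliers at $x_3$ are strictly positive and both constraints are active:
\[
\langle v_1,x_3-x_1\rangle = 0, \qquad \langle v_2,x_3-x_2\rangle = 0.
\]
Choosing an orthonormal frame on the plane with $v_1=(1,0)$ and $v_2=(-\cos\theta,\sin\theta)$, elementary trigonometry gives
\[
\|x_2-x_3\|=\frac{\|x_1-x_2\|\cos\theta}{\sin\theta}, \qquad v := \frac{x_2-x_3}{\|x_2-x_3\|} = \frac{1}{\sin\theta}\,v_1 + \frac{\cos\theta}{\sin\theta}\,v_2.
\]

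To conclude, the two activity relations let me peel off the base points: $\langle v_i,y-x_3\rangle=\langle v_i,y-z_i\rangle$ with $z_1=x_1$, $z_2=x_2$. Combining with the bounds of the first paragraph and the decomposition of $v$,
\[
\langle v,y-x_3\rangle \le \frac{\delta\|x_1-x_2\|}{\sin\theta}\bigl[(\beta+1)+\beta\cos\theta\bigr].
\]
The Pythagorean identity $\|y-x_3\|^2-\|y-x_2\|^2 = 2\|x_2-x_3\|\langle v,y-x_3\rangle - \|x_2-x_3\|^2$ is strictly negative exactly when $\langle v,y-x_3\rangle < \|x_2-x_3\|/2$; substituting the two displayed quantities and clearing the common factor $\|x_1-x_2\|/\sin\theta$ reduces this to $\delta[\beta\cos\theta+(\beta+1)] < \cos\theta/2$, which is hypothesis \eqref{eq:no-t-concl}. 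The main difficulty is extracting the sharp inequality \eqref{eq:no-t-concl} rather than the weaker one coming from a direct application of Lemma \ref{lem:derived-halfspaces} (which would yield $\sin(\theta/2)$ in place of $\sin\theta$, and a symmetric coefficient on the $\beta$ terms); this requires recognizing that both constraints are active at $x_3$ and using the explicit KKT multipliers of the projection onto $F$ instead of the worst-case aggregation constant.
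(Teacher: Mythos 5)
Your proof is correct and follows essentially the same route as the paper's: both arguments use the local metric inequality and super-regularity to place $y=P_{K}(x_{2})$ in the two relaxed halfspaces, observe that both constraints are active at $x_{3}$, and reduce via the same planar trigonometry to the half-distance criterion $\langle v,y-x_{3}\rangle<\frac{1}{2}\|x_{2}-x_{3}\|$, which is exactly \eqref{eq:no-t-concl}. The only difference is presentational --- the paper minimizes over the whole relaxed polyhedron $H_{1}\cap H_{2}$ and invokes the perpendicular bisector of $[x_{2},x_{3}]$, whereas you work directly with $y$ and the explicit decomposition $v=\frac{1}{\sin\theta}v_{1}+\frac{\cos\theta}{\sin\theta}v_{2}$; your three terms coincide with the quantities $\frac{1}{2}d_{1}$, $d_{2}$, $d_{3}$ in the paper's figure.
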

Conditions (1) and (2) are consequences of the super-regularity condition
and local metric inequality condition respectively, so they will be
satisfied when close to $K$. 
\begin{proof}
Since property (2) and the fact that $x_{1}\in K_{1}$ implies that
\[
d(x_{2},K)\leq\beta\max_{l\in\{1,2\}}d(x_{2},K_{l})=\beta d(x_{2},K_{1})\leq\beta\|x_{1}-x_{2}\|,
\]
the set $\mathbb{B}(x_{2},\beta\|x_{2}-x_{1}\|)\cap K$ is not empty.
Hence 
\begin{equation}
\emptyset\neq\mathbb{B}(x_{2},\beta\|x_{2}-x_{1}\|)\subset\mathbb{B}(x_{1},(\beta+1)\|x_{2}-x_{1}\|)\subset V.\label{eq:ball-chain}
\end{equation}
Let $y$ be any point in $\mathbb{B}(x_{2},\beta\|x_{2}-x_{1}\|)\cap K$.
By property (1), we have 
\begin{equation}
\langle y-x_{2},x_{1}-x_{2}\rangle\leq\delta\|y-x_{2}\|\|x_{1}-x_{2}\|\leq\beta\delta\|x_{1}-x_{2}\|^{2}.\label{eq:first-ball-nonempty}
\end{equation}
In other words, $\mathbb{B}(x_{2},\beta\|x_{2}-x_{1}\|)\cap K\subset H_{2}$,
where $H_{2}$ is the halfspace defined by
\[
H_{2}:=\{x:\langle x-x_{2},x_{1}-x_{2}\rangle\leq\beta\delta\|x_{1}-x_{2}\|^{2}\}.
\]
Next, from \eqref{eq:ball-chain}, we can make use of the argument
similar to \eqref{eq:first-ball-nonempty} to prove that 
\[
\mathbb{B}(x_{1},(\beta+1)\|x_{2}-x_{1}\|)\cap K\subset H_{1},
\]
where $H_{1}$ is the halfspace defined by 
\[
H_{1}:=\{x:\langle x-x_{1},x_{0}-x_{1}\rangle\leq(\beta+1)\delta\|x_{0}-x_{1}\|^{2}\}.
\]
 This implies that 
\begin{equation}
\emptyset\neq\mathbb{B}(x_{2},\beta\|x_{2}-x_{1}\|)\cap K\subset H_{1}\cap H_{2}.\label{eq:poly-contain}
\end{equation}

\begin{figure}[!h]
\includegraphics[scale=0.3]{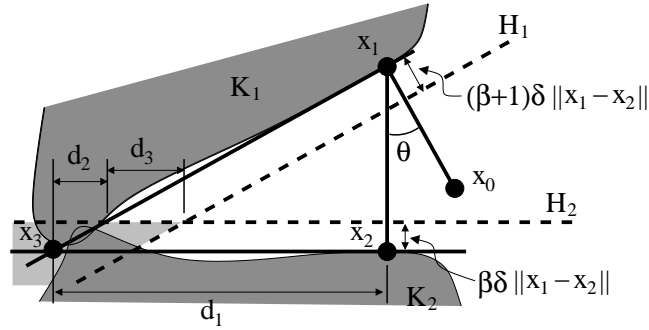}

\caption{\label{fig:no-t-fig}This figure illustrates the proof of Proposition
\ref{prop:2-SHQP}. The dotted lines show the boundaries of $H_{1}$
and $H_{2}$. }
\end{figure}

We refer to Figure \ref{fig:no-t-fig}, which shows the two dimensional
cross section containing $x_{0}$, $x_{1}$ and $x_{2}$. The point
$x_{3}$ is also shown in the figure, and is the projection of $x_{2}$
onto $H_{1}\cap H_{2}$. We now calculate the minimal value of $\langle\frac{x_{3}-x_{2}}{\|x_{3}-x_{2}\|},x\rangle$,
where $x$ ranges over $H_{1}\cap H_{2}$. This minimal value can
be seen to be $d_{1}-d_{2}-d_{3}$, where $d_{1}$, $d_{2}$ and $d_{3}$
are the distances as indicated in Figure \ref{fig:no-t-fig}. These
distances can be calculated to be 
\begin{eqnarray*}
d_{1} & = & \|x_{2}-x_{3}\|=\|x_{1}-x_{2}\|\cot\theta,\\
d_{2} & = & \beta\delta\|x_{1}-x_{2}\|\cot\theta,\\
\mbox{ and }d_{3} & = & (\beta+1)\delta\|x_{1}-x_{2}\|/\sin\theta.
\end{eqnarray*}
We can check that \eqref{eq:no-t-concl} is equivalent to $d_{2}+d_{3}<\frac{1}{2}d_{1}$.
As long as \eqref{eq:no-t-concl} holds, the region $H_{1}\cap H_{2}$
lies on the same side as $x_{3}$ of the perpendicular bisector of
the points $x_{2}$ and $x_{3}$. Hence all the points in $H_{1}\cap H_{2}$
are closer to $x_{3}$ than to $x_{2}$. Since $H_{1}\cap H_{2}$
contains all the points in $P_{K}(x_{2})$ by \eqref{eq:poly-contain},
we thus have $d(x_{3},K)<d(x_{2},K)$ as needed.
\end{proof}
Note that if $\theta<\pi/2$ is too close to $\pi/2$, then the condition
\eqref{eq:no-t-concl} can fail. In fact, if $\theta>\cos^{-1}\delta$,
one can check that condition (1) in Proposition \ref{prop:2-SHQP}
does not rule out $x_{2}$ being inside $K_{1}$, so there would be
no point calculating $x_{3}$. The supporting halfspaces as calculated
by the projection process can be too aggressive for super-regular
sets. For example, one can draw a manifold in $\mathbb{R}^{2}$ such
that the intersection the manifold and a halfspace generated by the
projection process consists of only one point. Two halfspaces of this
kind would give an empty intersection with the manifold. Therefore,
one has to relax the halfspaces. 

We remark that the procedure in \eqref{eq:first-ball-nonempty} shows
how to construct halfspaces under the super-regularity condition,
and can be augmented into Algorithm \ref{alg:Mass-proj-alg} as long
as we have a good estimate for $\delta$.

\section{\label{sec:global-strat}Global strategies}

In this section, we discuss methods for when local methods of the
nonconvex SIP are not appropriate. In Example \ref{exa:backtrack},
we show that while the theory for the convex SIP suggests that one
should not backtrack, backtracking is however suggested for the nonconvex
problem, which can lead to the Maratos effect and slows down convergence. 

The problem of finding a point in the intersection of a finite number
of closed sets $K_{l}\subset\mathbb{R}^{n}$, where $l=1,\dots,m$,
can be equivalently cast as the problem of finding a point that minimizes
$f(x)$, where $f(x)$ can be chosen as \begin{subequations}\label{eq:all-d-forms}
\begin{eqnarray}
 &  & d(x,\cap_{l=1}^{m}K_{l}),\label{eq:d-int-form}\\
 &  & \sum_{l=1}^{m}d(x,K_{l})^{2},\label{eq:d-2-norm-form}\\
 &  & \max_{l\in\{1,\dots,m\}}\{d(x,K_{l})\},\label{eq:d-max-form}
\end{eqnarray}
\end{subequations} or some other function similar to those presented
above. In the event that the intersection $\cap_{l=1}^{m}K_{l}$ is
nonempty, then any point in $K:=\cap_{l=1}^{m}K_{l}$ would be a global
minimizer of $f(\cdot)$. The function in \eqref{eq:d-int-form} is
the function of choice, but $\cap_{l=1}^{m}K_{l}$ can be only be
estimated well locally with the techniques in Section \ref{sec:local-strat}.
Instead of trying to minimize $f(\cdot)$, the problem that really
needs to be solved is the one of finding an $x$ in $\{\tilde{x}:f(\tilde{x})\leq0\}$.
This is a simpler problem which can be solved by a subgradient projection
method that is somewhat simpler than the minimization problem. A bundle
method \cite{HiriartUrrutyLamerechal93a,BonnansGilbertLemarechalSagastizabal06}
adapted for a nonconvex objective function can be used to solve the
nonconvex SIP. (See also \cite{BauschkeWangWangXu14,Pang_nonconvex_ineq}
for the principles of a finitely convergent algorithm for this setting.
This idea of finite convergence goes back to \cite{Polak_Mayne79,Mayne_Polak_Heunis81,Fukushima82,DePierroIusem88}
for the convex case and the smooth case.)

A standard procedure in optimization algorithms is the line search
procedure. A search direction is calculated, and the next solution
is obtained by a line search along this search direction. For the
nonconvex SIP, the search direction can be calculated by projecting
onto a polyhedron formed by intersecting a number of previously generated
halfspaces. There are two ways we can backtrack to obtain decrease
in some objective function (in \eqref{eq:all-d-forms} or otherwise).
Firstly, one can remove halfspaces that describe the polyhedron. It
is sensible to remove the older halfspaces since they become less
reliable. This has the effect of reducing the distance from the current
iterate to the polyhedron, so the search direction is more likely
to give decrease. The problem of projecting onto the polyhedron with
one halfspace removed can be solved effectively from the old solution
using a warmstart quadratic programming algorithm (for example, the
active set method of \cite{Goldfarb_primal}). Secondly, one can use
the usual backtracking line search. 

We note however that in the pursuit of obtaining decrease in the objective
function, we may encounter the Maratos effect (see \cite[Section 15.5]{NW06},
who in turn cited \cite{Maratos}) which slows convergence. 
\begin{example}
\label{exa:backtrack}(Backtracking slows convergence)  In this example,
we show how the SHQP strategy for a convex SIP converges quickly for
a problem, but would be slowed down by backtracking when treated as
a nonconvex SIP. Consider the sets $K_{1},K_{2}\subset\mathbb{R}^{3}$
where $K_{1}=H_{1}$ and $K_{2}=H_{2}\cap H_{3}$, where the halfspaces
$H_{1}$, $H_{2}$ and $H_{3}$ are defined by 
\begin{eqnarray*}
H_{1} & := & \{x\in\mathbb{R}^{3}:(0,1,0)x\leq0\},\\
H_{2} & := & \{x\in\mathbb{R}^{3}:(\nicefrac{1}{3},-1,0)x\leq-2\},\\
\mbox{and }H_{3} & := & \{x\in\mathbb{R}^{3}:(-1,-1,1)x\leq0\}.
\end{eqnarray*}
Let the point $x_{0}$ be $(0,1,0)$. The projection of $x_{0}$ onto
$K_{1}$ and $K_{2}$ generates the halfspaces $H_{1}$ and $H_{2}$
respectively. The projection of $x_{0}$ onto $H_{1}\cap H_{2}$ is
$x_{1}:=(-6,0,0)$. We can calculate that 
\begin{equation}
d(x_{0},K_{1})=1,\mbox{ }d(x_{0},K_{2})=\nicefrac{3}{\sqrt{10}},\mbox{ }d(x_{1},K_{1})=0\mbox{ and }d(x_{1},K_{2})=2\sqrt{3}.\label{eq:exa-vals}
\end{equation}
The projection of $x_{1}$ onto $K_{2}$ generates $H_{3}$, and once
we project $x_{1}$ onto $H_{1}\cap H_{2}\cap H_{3}$, we found a
point in $K_{1}\cap K_{2}$. If this SIP were solved as a nonconvex
SIP, the values in \eqref{eq:exa-vals} fitted into the objective
function \eqref{eq:d-2-norm-form} or \eqref{eq:d-max-form} suggests
that one has to backtrack in some manner, and this actually slows
down the convergence. (See Figure \ref{fig:exa-backtr} for an illustration.)
\end{example}
\begin{figure}[!h]
\includegraphics[scale=0.4]{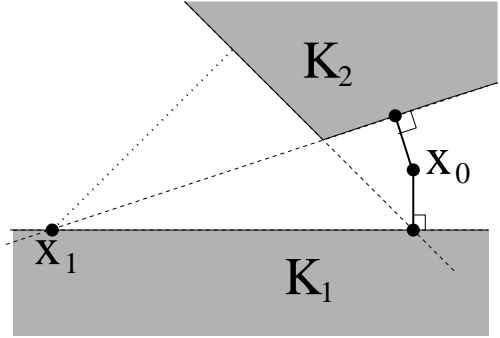}

\caption{\label{fig:exa-backtr}This figure illustrates the two dimensional
cross section in $\{x\in\mathbb{R}^{3}:x_{3}=0\}$ in the example
in Example \ref{exa:backtrack}. Note that the projection of $x_{1}$
onto $K_{2}$ lies outside this cross section.}
\end{figure}

We recall the method of averaged projections for finding a point
in $\cap_{l=1}^{m}K_{l}$, where $K_{l}\subset\mathbb{R}^{n}$ for
all $l\in\{1,\dots,m\}$, is defined by 
\begin{equation}
x_{i+1}=\frac{1}{m}\sum_{l=1}^{m}P_{K_{l}}(x_{i}).\label{eq:avg-proj}
\end{equation}
It was noticed that this formula corresponds to the method of alternating
projections between the two sets in $\mathbb{R}^{nm}$ defined by
\begin{eqnarray*}
\mathbf{D} & := & \{(x,x,\dots,x):x\in\mathbb{R}^{n}\}\\
\mbox{and }\mathbf{K} & := & K_{1}\times K_{2}\times\cdots\times K_{m}.
\end{eqnarray*}
 It is easy to see that $f(x_{i+1})\leq f(x_{i})$ if $x_{i+1}$ is
defined by \eqref{eq:avg-proj} and $f(\cdot)$ is defined by \eqref{eq:d-2-norm-form}
since $\sqrt{f(x)}$ is the distance of $(x,\cdots,x)\in\mathbf{D}$
to $\mathbf{K}$. Moreover, if $f(x_{i+1})=f(x_{i})$, then $x_{i}$
is the minimizer. 

In the SHQP strategy for nonconvex problems, we can use backtracking
to find the next iterate $x_{i}$ of the form $tP_{\tilde{F}_{i}}(x_{i-1})+(1-t)x_{i-1}$,
where $t\in(0,1]$ and $\tilde{F}_{i}$ is the polyhedron defined
by intersecting previously generated halfspaces like in Algorithm
\ref{alg:Mass-proj-alg}. We can instead find an iterate of the form
\[
tP_{\tilde{F}_{i}}(x_{i-1})+(1-t)\frac{1}{m}\sum_{l=1}^{m}P_{K_{l}}(x_{i-1}).
\]

Other heuristics for the nonconvex problem are also possible. For
example, if one is certain that the intersection is nonempty, then
one can try to avoid points in the balls $\mathbb{B}(x_{i},d(x_{i},K_{l}))$
for all $i\geq0$ and $l\in\{1,\dots,m\}$. If some of the sets are
spectral sets (i.e., the set of symmetric matrices solely described
by their eigenvalues), then the results in \cite{LewisMalick08} can
also be applied.

\section{Conclusion}

We hope our results make the case that in solving feasibility problems
involving super-regular sets, one should use the SHQP procedure as
much as possible to accelerate convergence once close enough to the
intersection. The size of the QPs to be solved can be kept to be of
a manageable size if we combine with projection methods like in Algorithm
\ref{alg:basic-alg}. \bibliographystyle{amsalpha}
\bibliography{../refs}

\end{document}